\definecolor{blue}{rgb}{0,0,0.9}
\definecolor{red}{rgb}{0.9,0,0}
\definecolor{green}{rgb}{0,0.9,0}
\theoremstyle{plain}
\newtheorem{theo}{Theorem}[section]
\newtheorem{claim}[theo]{Claim}
\newtheorem{lem}[theo]{Lemma}
\newtheorem{coro}[theo]{Corollary}
\newtheorem{prop}[theo]{Proposition}
\theoremstyle{definition}
\newtheorem{defi}[theo]{Definition}
\theoremstyle{remark}
\newtheorem{rem}[theo]{Remark}
\newtheorem{assump}{Assumption}
\def\pf{\noindent {\it Proof.\quad}}
\def\Tr{\mathrm{Tr}}
\def\<{\left\langle}
\def\inprod#1#2{\big\langle#1,\,#2\big\rangle}
\def\norm#1{\|#1\|}
\def\>{\right\rangle}
\def\M{\mathcal{M}}
\def\T{\mathcal{T}}
\def\D{{\rm D}}
\def\DD{{\rm Diag}}
\def\A{\mathcal{A}}
\def\Ka{\mathcal{K}_{n,r}}
\def\R{\mathbb{R}}
\def\W{\mathcal{W}}
\def\Re{\mathcal{R}}
\def\Q{\mathcal{Q}}
\def\N{\mathcal{N}}
\def\S{\mathbb{S}}
\def\s{{\rm S}}
\def\mr{{\rm mr}}
\def\rr{{\rm rank}}
\def\dist{{\rm dist}}
\def\dd{{\rm diag}}
\def\P{{\rm Proj}}
\def\OB{{\rm OB}}
\def\Ron{\texttt{Round}}
\def\({\left(}
\def\){\right)}
\let\svthefootnote\thefootnote
\newcommand\blankfootnote[1]{%
	\let\thefootnote\relax\footnotetext{#1}%
	\let\thefootnote\svthefootnote%
}
\begin{document}
	\title{ A feasible method for solving an SDP relaxation of the quadratic knapsack problem}
	\author{Tianyun Tang
\thanks{Department of Mathematics, National
         University of Singapore, Singapore
         119076 ({\tt ttang@u.nus.edu}).
         }, \quad 
	 Kim-Chuan Toh\thanks{Department of Mathematics, and Institute of 
Operations Research and Analytics, National
         University of Singapore, 
       Singapore
         119076 ({\tt mattohkc@nus.edu.sg}).  The research of this author is supported
by the Ministry of Education, Singapore, under its Academic Research Fund Tier 3 grant call (MOE-2019-T3-1-010).}
	 }

	\date{\today}
	\maketitle

\begin{abstract}
In this paper, we consider an SDP relaxation of the quadratic knapsack problem (QKP). 
After applying  low rank factorization, we get a non-convex problem, whose feasible region is an algebraic variety with {certain good geometric properties which we analyse.} We derive a rank condition under which these two formulations are equivalent. This rank condition is much weaker than the classical rank condition if the coefficient matrix has certain special structures. We also prove that under an appropriate rank condition, the non-convex problem has no spurious local minima without {assuming linearly independent constraint qualification}. We design a feasible method that can escape from non-optimal non-regular points. Numerical experiments are conducted to verify the high efficiency and robustness of our algorithm as compared to other solvers. In particular, our algorithm is able to solve a one-million dimensional sparse SDP problem accurately in about 20 minutes on a modest computer.
\end{abstract}

\section{Introduction.}\label{SecIntro}

\subsection{An SDP relaxation of quadratic knapsack problem.}\label{SubSDP}

The binary quadratic knapsack problem (QKP) was introduced in \cite{QKP} by Gallo et al.  as follows: 
\begin{equation}\label{QKP}
{\rm (QKP)}\ \max\left\{ x^\top C x:\ a^\top x\leq \tau,\ x\in \{0,1\}^n\right\},  \notag
\end{equation}
where $C\in \S^n$ is the profit matrix, $a\in \R^n$ is the positive weight vector and $\tau>0$ is the knapsack capacity. We consider the nontrivial case where $n>1.$ We also make the following natural assumption on the data $(a,\tau)$.
\begin{assump}\label{ass1}

The weight vector $a\in \R^n$ and capacity $\tau\in \R$ satisfy the condition that for any $i\in [n]$, $0<a_i<\tau$ and $e^\top a>\tau,$ where $e$ is the vector of all ones.
\end{assump}

The problem (QKP) has many applications such as {very large-scale integration (VLSI)} and compiler design \cite{VLSI,compiler}. When $C$ is a diagonal matrix, (QKP) reduces to a binary linear knapsack problem. (QKP) is NP-hard because it can be reduced to the clique problem (which is closely related to the densest $k-$subgraph problem)  by choosing $\tau=k\in \mathbb{N}^+,$ $a$ to be the all ones vector and $C$ to be the adjacency matrix of an undirected graph. Like many combinatorial optimization problems, (QKP) is extremely difficult to be solved exactly. Many SDP relaxations are provided in the literature to calculate an upper bound of (QKP). In \cite{SQK3}, Pisinger discussed 4 types of SDP relaxations of (QKP) whose optimal values are ordered as $U^1_{\rm HRW}\geq U^2_{\rm HRW}\geq U^3_{\rm HRW}\geq U^4_{\rm HRW}$ respectively. {These SDP models were originally provided by Helmberg et al. in \cite{SQK2}. However, {the authors} only mentioned the SDP model with optimal value $U^3_{\rm HRW}$ for a special case of (QKP) and it was later generalized by Pisinger in \cite{SQK3}.} Among them, the first three models have the same number of constraints and the last one has multiple inequality constraints. However, it has been shown in the numerical experiments of \cite{SQK2} that even though the first three SDP models are simpler than the last one, $U^2_{\rm HRW}$ is already quite close to $U^4_{\rm HRW}.$ Therefore, in this paper, we only consider the third SDP model with both tightness and simplicity. We write it as follows:
\begin{equation}\label{SQK}
{\rm (SQK)}\ \min\left\{ -\<C,X\>:\ a^\top Xa-\tau a^\top x\leq 0,\ \dd\(X\)=x,\ Y:=\begin{pmatrix} 1&x^\top \\ x&X\end{pmatrix}\in \S^{n+1}_+\right\}. \notag
\end{equation}
{While the original problem is a maximization problem, we add a minus sign to the objective function to transform it into a minimization problem.} Note that (SQK) is an SDP problem with the matrix variable $Y$ of size $(n+1)\times (n+1).$ The SDP constraint $Y\in \S^{n+1}_+$ is equivalent to $X-xx^\top\in \S^n_+$ by the Schur complement lemma.
For notational convenience, we will sometimes use $(1,x^\top; x, X)$ to denote the matrix 
$\begin{pmatrix} 1&x^\top \\ x&X\end{pmatrix}.$

\subsection{Our contributions.}\label{SubOC}
In this paper, instead of solving (SQK), we consider the following problem that transfers the inequality constraint in (SQK) into an affine constraint:
\begin{equation}\label{SQKE}
{\rm (SQKE)}\ \min\left\{ -\<C,X\>:\ a^\top Xa-\tau a^\top x= 0,\ \dd\(X\)=x,\ Y:=\begin{pmatrix} 1&x^\top \\ x&X\end{pmatrix}\in \S^{n+1}_+\right\}. \notag
\end{equation}
Although in general, (SQKE)  is not equivalent to (SQK), we {can easily} construct an optimal solution of (SQK) from the solution of (SQKE) in section~\ref{Rela}. The feasible region of (SQKE) is nonempty because it has a feasible solution 
$Y:= (1,0_{1\times n}\,;\,0_{n\times 1},0_{n\times n})$. 
Note that from the last two constraints in (SQKE), we have that for any $i\in [n],$ 
$( 1,x_i\,;\,x_i,x_i)\succeq 0.$
This implies that $0\leq x_i\leq 1,$ which in addition implies that every entry of $X$ is inside the interval $[-1,1].$ Therefore, the feasible region of (SQKE) is compact.  
(SQKE) is a typical linear SDP problem, which can be solved with guaranteed convergence by various well developed solvers such as SDPT3 \cite{SDPTTT,SDPT3}, MOSEK \cite{mosek}, and SDPNAL \cite{SDPNALp,SDPNAL}. Those solvers are efficient and accurate for problems of moderate size. However, if $n$ is large, (SQKE) will be challenging to solve by the aforementioned solvers due to excessive computing cost and memory demand. In order to solve (SQKE) efficiently, we have conducted both theoretical and computational studies, which are listed as follows.

\subsubsection{A modified BM factorization.}\label{SubBM}
In order to overcome the large dimensionality, we consider the following low rank version of (SQKE),
\begin{equation}\label{SQKELR}
{\rm (SQKELR)}\ \min\left\{ -\inprod{C}{RR^\top} :\ \norm{a^\top R}^2-\tau a^\top Re_1= 0,\ \dd\big(RR^\top\big)=Re_1,\ R\in \R^{n\times r}\right\}, \notag
\end{equation}
where $e_1\in \R^r$ is such that its first entry is 1 and all other entries are zero. The above factorization model comes from the fact that for any 
$Y:=(1,x^\top \,;\, x,X)\in \S^{n+1}_+$ 
of rank $r$, it can be factorized as $\begin{pmatrix} e_1 &R^\top\end{pmatrix}^\top\begin{pmatrix} e_1 &R^\top\end{pmatrix}$ for some $R\in \R^{n\times r}.$ Note that our factorization is different from the classical Burer-Monteiro factorization (see \cite{BM1,BM2}) because we fix the first row of the low rank matrix to be $e_1^\top.$ By doing so, we can avoid dealing with the constraint $Y_{11}=1$ because it is automatically satisfied. Another advantage of this factorization is that we can prove that there are only finitely many non-regular points in the feasible region of (SQKELR). However, this is not the case for the usual 
Burer-Monteiro factorization. The reason is that for any matrix $R\in \R^{n\times r}$ and orthogonal matrix $Q\in \R^{r\times r},$ $RR^\top=RQQ^\top R^\top.$ This implies that $R$ is equivalent to $RQ$ and one non-regular point will implies infinite many non-regular points.

\subsubsection{An algebraic variety with good geometric properties.}\label{SubAV}

There are many algorithms and approaches for solving the low rank formulation of linear SDP problems. A general solver is SDPLR by Burer and Monteiro (see \cite{BM1,BM2}), which use an augmented Lagrangian method to solve the factorized SDP problems. Another approach is based on a feasible method, which strictly preserves the constraints. A feasible method has the advantage that it can terminate in advance when the primal feasibility is more important than the optimality of a solution. A famous application of feasible methods is the low rank SDP relaxation of max-cut problem (see \cite{Boumal1,BMmaxcut,BM1,WenYin}). Because a feasible method usually relies on the manifold structure of the feasible region, in section~\ref{Geom}, we will analyze the geometric properties of the feasible region of (SQKELR) i.e.,

\begin{equation}\label{Kamani}
\Ka:=\left\{ R\in \R^{n\times r}:\ \norm{a^\top R}^2-\tau a^\top Re_1= 0,\ 
\dd\big(RR^\top\big)=Re_1\right\}.
\end{equation}

We will show that for a generic data $\(a,\tau\)\in \R^n\times \R$ that satisfies Assumption~\ref{ass1}, $\Ka$ is smooth everywhere except for the trivial point $R=0$. We will also study the local geometric properties of $\Ka$ at non-regular points, which is useful for our theoretical analysis and algorithmic design.

\subsubsection{A new rank bound for (SQKE) to be equivalent to (SQKELR).}\label{SubR}

An interesting problem to investigate is when is (SQKE) equivalent to (SQKELR). {Note that there are only $m:=n+2$ affine constraints and one of the affine constraints of (SQKE) is $Y_{11}=1.$} The compactness of the nonempty feasible region of (SQKE) implies that it has an optimal solution of rank $\leq \lceil\sqrt{2(n+2)}\rceil$  (see, e.g., \cite{rank1,rank2,rank3}). Thus, when $r\geq \lceil\sqrt{2(n+2)}\rceil,$ (SQKELR) and (SQKE) have the same optimal function value. Note that $\lceil\sqrt{2m}\rceil$ is the theoretical rank-bound, which only relies on the number of constraints of an SDP problem while ignoring any structural information of the coefficient data. In subsection~\ref{rankbound}, we will derive a new rank-bound that is related to the structure of the {profit} matrix $C.$ This new bound is much stronger than $\lceil\sqrt{2(n+2)}\rceil$ in the case of a binary linear knapsack problem. 

\subsubsection{(SQKELR) doesn't have spurious local minima.}\label{NoSp}

Another interesting problem to investigate is when does (SQKELR) have no spurious local minima. Boumal et al. did a series of works (see \cite{Boumal0,Boumal1,Boumal2}) to study when does the Burer-Monteiro formulation of a linear SDP problem have no spurious local minima. Their analysis is based on the assumption that the feasible region of the low rank SDP problem is a smooth manifold. In subsection~\ref{Secsta}, we will show that any second order stationary point of (SQKELR) is a global optimal solution under a suitable rank condition that is similar to the results in \cite{Boumal0,Boumal1,Boumal2}. Compared with their works, our result has the advantage that we don't need any regularity assumption.

\subsubsection{A Riemannian optimization approach that can handle non-regular points.}\label{RieAlg}
In section~\ref{Algorithm}, we will discuss the details of our algorithmic implementation. We will use the theoretical results in previous sections to design an algorithm that combines a Riemannian optimization method with a verification and escaping strategy at non-regular points. Our algorithm is similar to algorithm 1 in \cite{GEP} and both of them can stop after finite number of outer iterations. We also apply rounding to the output of our algorithm to get an integer solution that is feasible for (QKP). In section~\ref{Numerical}, we conduct numerical experiments to verify the efficiency of our algorithm compared with the solver SDPLR and another recent solver SketchyCGAL 
 developed by Yurtsever et al. in \cite{CGAL}. Note that all these three algorithms make use of the low rank property of (SQKE) but our algorithm is the only feasible method that exploit the geometric properties of (SQKELR). Our algorithm can be more than 100 times faster than the other two solvers for some high dimensional problems with $n\geq 5000$. For a large sparse profit matrix $C$ of size $n=10^6$, our algorithm can find an accurate optimal solution of (SQKE) within half an hour. We also test the behaviour of our algorithm with a rounding procedure for solving (QKP). We show that our algorithm can find a nearly optimal solution for (QKP) very efficiently.

\subsection{Notations.}\label{Notation}
In this paper, we often omit stating the dimension of a vector or matrix if it is already clear from the context. Without mentioning explicitly, $\|.\|$ always denote the matrix Frobenius norm. We denote the trace of a square matrix $X$ as $\Tr(X)$.
For two matrices with the same size, $\<A,B\>:=\Tr\(AB^\top\)$ is the matrix inner product; $e$ is the vectors of all ones; $e_1$ is the vector such that its first entry is 1 and all the other entries are zero; ${\rm S}^{r-1}:=\left\{ x\in \R^r:\ \|x\|=1\right\};$ $\S^n$ and $\S^n_+$ denote the space of $n\times n$ symmetric matrices and its subset of symmetric positive semidefinite matrices, respectively. For notational simplicity, we use $X\succeq 0$ to denote $X\in \S^n_+$. For $x\in \R^n,$ $\dd\(x\)\in \S^n$ is the diagonal matrix whose diagonal is given by $x.$ For $X\in \R^{n\times n}$, $\dd(X) \in \R^n$ is the diagonal vector of $X$, and $\DD(X) = \dd(\dd(X)).$ For $A,B\in \R^{m\times n}$, $A\circ B\in \R^{m\times n}$ is defined by $\(A\circ B\)_{ij}=a_{ij}b_{ij}.$
 
\subsection{Organization of the rest of the paper.}\label{Orga}
In section~\ref{Rela}, we discuss when is (SQK)  equivalent to (SQKE) and how to solve (SQK) via solving (SQKE). In section~\ref{Geom}, we study the geometric properties of  the algebraic variety $\Ka.$ In section~\ref{Equiv}, we discuss the equivalence between (SQKE) and (SQKELR), which includes a new rank-bound for (SQKELR). In section~\ref{Algorithm}, we design an algorithm to solve (SQKELR). In section~\ref{Numerical}, we conduct numerical experiments to compare our algorithm with SDPLR and SketchyCGAL. In section~\ref{Conclusion}, we give a brief conclusion. 

\section{Relation between (SQK) and (SQKE).}\label{Rela}

In this section, we consider how to construct an optimal solution of (SQK) by solving (SQKE). We have the following proposition.
\begin{prop}\label{equiv}
Suppose $(a,\tau)$ satisfy Assumption~\ref{ass1}. If every entry of $C$ is nonnegative, then (SQK) and (SQKE) have the same optimal function value.
\end{prop}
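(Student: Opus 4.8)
The plan is to show the two optimal values coincide by establishing the inequality that is not immediate. Since the feasible region of (SQKE) is contained in the feasible region of (SQK) (an equality constraint is more restrictive than the corresponding inequality), we automatically have that the optimal value of (SQK) is at most the optimal value of (SQKE)—here recall both are written as minimization problems after the sign flip, so "more feasible points" means a smaller or equal minimum for (SQK). Hence it suffices to prove the reverse: given an optimal solution $Y^\star = (1, (x^\star)^\top; x^\star, X^\star)$ of (SQK), I will construct a feasible point of (SQKE) with objective value no larger than $-\langle C, X^\star\rangle$.

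The key construction is a scaling/shrinking argument. If $Y^\star$ already satisfies $a^\top X^\star a - \tau a^\top x^\star = 0$ we are done, so suppose $a^\top X^\star a - \tau a^\top x^\star < 0$, i.e. the inequality is strict. The idea is to move $Y^\star$ toward a point where the constraint becomes tight while keeping all the other constraints satisfied and not increasing the objective. I would consider a parametrized family such as $Y(t) = $ a convex-type combination that interpolates between $Y^\star$ and a point with strictly positive slack of the opposite sign, or more directly, scale the vector $x$ and matrix $X$ by a factor $t \in [1, \bar t]$ via $x \mapsto t x^\star$, $X \mapsto$ (something compatible with $\dd(X) = x$ and PSD-ness). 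The constraint $\dd(X) = x$ forces the diagonal scaling and the off-diagonal entries must be handled so that the Schur complement $X - x x^\top \succeq 0$ is preserved; a clean choice is to keep $X^\star - x^\star (x^\star)^\top$ fixed and set $x(t) = t x^\star$, $X(t) = X^\star - x^\star(x^\star)^\top + t^2 x^\star (x^\star)^\top$, which keeps $Y(t) \succeq 0$ and $\dd(X(t)) = t^2 \dd(x^\star(x^\star)^\top) + \dd(X^\star - x^\star(x^\star)^\top)$—wait, this does not match $x(t) = t x^\star$ unless $x^\star \in \{0,1\}^n$, so the scaling must instead be chosen to respect $\dd(X(t)) = x(t)$. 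The correct route is: increase $x^\star$ toward the all-ones pattern componentwise (using $0 \le x^\star_i \le 1$), say $x_i(t) = x^\star_i + t(\text{something})$, accompanied by the matching update of $X$, and use that $e^\top a > \tau$ from Assumption~\ref{ass1} to guarantee the quadratic form $a^\top X a - \tau a^\top x$ can be driven up to $0$. The nonnegativity of $C$ is exactly what ensures that enlarging the entries of $X$ in this controlled way does not increase $-\langle C, X\rangle$.

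The main obstacle is getting the interpolation to simultaneously (i) keep $Y(t) \in \S^{n+1}_+$, (ii) keep $\dd(X(t)) = x(t)$, (iii) keep each $x_i(t) \in [0,1]$, (iv) drive $a^\top X(t) a - \tau a^\top x(t)$ from negative up to $0$ at some finite $t$, and (v) have $-\langle C, X(t)\rangle$ nonincreasing in $t$—all at once. I expect the cleanest implementation is to perturb only in the direction of a rank-one block: replace $Y^\star$ by $Y^\star + s\, (e_i - \text{proj})$-type updates, or to work entry-by-entry, raising one coordinate $x_i$ from $x^\star_i$ toward $1$ (and the corresponding row/column of $X$ in a PSD-preserving way, e.g. via a rank-one update $v v^\top$ with $v$ supported on coordinates $\{0, i\}$), which monotonically does not decrease $a^\top X a - \tau a^\top x$ because $a > 0$, and does not increase $-\langle C, X\rangle$ because $C \ge 0$. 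By an intermediate-value argument on the continuous slack function we reach equality exactly, yielding the desired (SQKE)-feasible point. I would close by noting that this construction also shows how to recover a genuine (SQK)-optimal solution from the (SQKE) solve, matching the claim in the paper that an optimal solution of (SQK) can be easily constructed from that of (SQKE).
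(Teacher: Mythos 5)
Your framework is right: feasible-region containment gives $\mathrm{opt(SQK)}\leq\mathrm{opt(SQKE)}$, so it suffices to produce an (SQKE)-feasible point whose objective is at most the (SQK) optimum. But the concrete construction you propose does not work as stated, and that is exactly where the content of the proposition lies. The entry-by-entry scheme has two problems. First, a rank-one update $vv^\top$ with $v$ supported on the coordinates $\{0,i\}$ cannot preserve feasibility: if the component on the first coordinate is nonzero it changes the top-left entry and violates $Y_{11}=1$, while if $v$ is supported on $i$ alone it raises $X_{ii}$ without raising $x_i$ and violates $\dd(X)=x$. Second, the claimed monotonicity of the slack is false: raising $x_i$ and $X_{ii}$ by $\delta$ while leaving the rest of row $i$ unchanged changes $a^\top Xa-\tau a^\top x$ by $a_i\delta(a_i-\tau)<0$, since Assumption~\ref{ass1} gives $a_i<\tau$. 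The slack can only be pushed upward through the off-diagonal cross terms $a_ia_jX_{ij}$, and whether those can be raised enough while keeping $Y\succeq 0$ is nontrivial (for instance, once $x_i=X_{ii}=1$, positive semidefiniteness forces $X_{ij}=x_j$ for every $j$); this is precisely the simultaneous-preservation issue (i)--(v) that you flag but leave unresolved.

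The missing idea, which is the paper's proof, is to avoid coordinatewise surgery entirely: take $\widehat Y=ee^\top$, which satisfies $\dd(\widehat X)=\hat x$, $\widehat Y\succeq 0$, $\widehat Y_{11}=1$, and has slack $(a^\top e)^2-\tau a^\top e>0$ because $e^\top a>\tau$; its objective is at least as good as that of an (SQK)-optimal $Y$ because $C\geq 0$ and every entry of $X$ lies in $[-1,1]$. Along the segment $Y_t=t\widehat Y+(1-t)Y$ all constraints of (SQKE) other than the slack are convex and hold at both endpoints, the slack $a^\top Xa-\tau a^\top x$ is affine in $Y$, so the intermediate value theorem yields $t\in(0,1)$ with zero slack, and by linearity the objective at $Y_t$ does not exceed the (SQK) optimum. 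Your write-up circles this (moving toward the all-ones pattern, an intermediate-value argument on the slack) but never lands on a path that provably preserves all the constraints at once, so as written there is a genuine gap.
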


\begin{proof}
\pf
{Note that Assumption~\ref{ass1} implies that $\tau>0.$} We only have to prove that there exists an optimal solution 
$Y:= (1,x^\top \,;\, x,X)$ 
of (SQK) such that $a^\top Xa-\tau a^\top x= 0.$ In order to prove this, assume that $a^\top Xa-\tau a^\top x<0$ for some optimal solution $Y$ of (SQK) because otherwise there is nothing to prove. Consider another matrix 
$\widehat{Y}:=(1,\hat{x}^\top \,;\, \hat{x},\widehat{X})=ee^\top.$ 
From Assumption~\ref{ass1}, we have that 
\begin{equation}\label{May_20_1}
a^\top \widehat{X}a-\tau a^\top \hat{x}=\big(a^\top e\big)^2-\tau a^\top e>0.
\end{equation}
 Also, because every entry of $C$ is nonnegative, and every entry of $Y$ is inside the interval $[-1,1]$ as shown in the introduction. We have that 
 \begin{equation}\label{May_20_2}
 -\inprod{C}{\widehat{X}}\leq -\inprod{ C}{X}.
 \end{equation}
 Now, consider $Y_t:=t\widehat{Y}+(1-t)Y.$ From (\ref{May_20_1}), (\ref{May_20_2}) and the convexity of (SQK), we have that there exists $t\in (0,1)$ such that $Y_t$ is inside the feasible region of (SQKE) with its function value less than or equal to the function value at $Y.$ From the optimality of $Y,$ we have that $Y_t$ is also an optimal solution of (SQK). Thus, we have found an optimal solution of (SQK), which is also an optimal solution of (SQKE).
 \end{proof}

Proposition~\ref{equiv} requires that the profit matrix $C$ is nonnegative. However, the following proposition tells us that if $C$ is not nonnegative, {we} can still construct an optimal solution of (SQK) by solving (SQKE) and the following simpler problem.
\begin{equation}\label{SQKS}
{\rm (SQKS)}\ \min\left\{ -\<C,X\>:\ \dd\(X\)=x,\ \begin{pmatrix} 1&x^\top \\ x&X\end{pmatrix}\in \S^{n+1}_+\right\}. \notag
\end{equation}

\begin{prop}\label{equiv1}
Suppose $(a,\tau)$ satisfy Assumption~\ref{ass1}. If (SQK) and (SQKE) don't have the same optimal function value, then any optimal solution of (SQKS) is also an optimal solution of (SQK).
\end{prop}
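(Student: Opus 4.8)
The idea is to interpolate between an optimal solution of (SQK) and a hypothetical knapsack-violating optimal solution of (SQKS), exploiting that the knapsack expression $g(Y) := a^\top X a - \tau a^\top x$ is an affine function of $Y=(1,x^\top;x,X)$ and that all three feasible regions are convex and compact. Write $v_{\mathrm K}, v_{\mathrm E}, v_{\mathrm S}$ for the optimal values of (SQK), (SQKE), (SQKS). Since the feasible region of (SQKE) is contained in that of (SQK), which in turn is contained in that of (SQKS), we have $v_{\mathrm S}\le v_{\mathrm K}\le v_{\mathrm E}$, and all three optima are attained because the feasible regions are nonempty and compact (the entries of $X$ lie in $[-1,1]$, exactly as noted for (SQKE) in the introduction). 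The hypothesis is $v_{\mathrm K}\ne v_{\mathrm E}$, hence $v_{\mathrm K}<v_{\mathrm E}$.

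First I would record that every optimal solution $Y^\star=(1,(x^\star)^\top;x^\star,X^\star)$ of (SQK) satisfies the knapsack constraint \emph{strictly}, i.e. $g(Y^\star)<0$. Indeed $g(Y^\star)\le 0$ by feasibility, and if $g(Y^\star)=0$ then $Y^\star$ is feasible for (SQKE), so $v_{\mathrm E}\le -\langle C,X^\star\rangle=v_{\mathrm K}$, contradicting $v_{\mathrm K}<v_{\mathrm E}$.

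Now let $\bar Y=(1,\bar x^\top;\bar x,\bar X)$ be any optimal solution of (SQKS); the goal is to show $\bar Y$ is feasible for (SQK), i.e. $g(\bar Y)\le 0$. Suppose instead $g(\bar Y)>0$. Fixing an optimal solution $Y^\star$ of (SQK), consider $Y_t:=t\bar Y+(1-t)Y^\star$ for $t\in[0,1]$. Each $Y_t$ lies in the feasible region of (SQKS), which is convex (it is cut out by the linear constraints $\dd(X)=x$, $Y_{11}=1$, together with the PSD cone). Since $g$ is affine with $g(Y_0)=g(Y^\star)<0$ and $g(Y_1)=g(\bar Y)>0$, the intermediate value theorem yields $t_0\in(0,1)$ with $g(Y_{t_0})=0$, so $Y_{t_0}$ is feasible for (SQKE). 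But its objective value is $-\langle C,X_{t_0}\rangle=t_0 v_{\mathrm S}+(1-t_0)v_{\mathrm K}\le v_{\mathrm K}<v_{\mathrm E}$, contradicting feasibility for (SQKE). Hence $g(\bar Y)\le 0$, so $\bar Y$ is feasible for (SQK); then $v_{\mathrm K}\le -\langle C,\bar X\rangle=v_{\mathrm S}\le v_{\mathrm K}$ forces $v_{\mathrm S}=v_{\mathrm K}$ and makes $\bar Y$ an optimal solution of (SQK).

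The individual steps are short; the only thing to watch is the bookkeeping of the value inequalities — keeping straight that (SQKE) is a restriction and (SQKS) a relaxation of (SQK) — and a clean statement of convexity and compactness of the feasible sets. The single load-bearing idea is the interpolation: because (SQK) has an optimum strictly inside the knapsack half-space while a putative (SQKS) optimum lies strictly outside it, the segment joining them must meet the hyperplane $g=0$ at a point that is simultaneously (SQKE)-feasible and has objective value at most $v_{\mathrm K}$, which is impossible once $v_{\mathrm K}<v_{\mathrm E}$.
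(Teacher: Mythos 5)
Your proposal is correct and follows essentially the same route as the paper's proof: interpolate between an optimal solution of (SQK) that satisfies the knapsack constraint strictly (which exists precisely because $V_E>V$) and a hypothetical knapsack-violating optimal solution of (SQKS), producing an (SQKE)-feasible point with objective value strictly below $V_E$, a contradiction. Your write-up merely makes explicit a few steps the paper leaves implicit (the affine intermediate-value argument and the final deduction that the (SQKS) optimum, once (SQK)-feasible, is (SQK)-optimal).
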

\begin{proof}
\pf
Let $V,V_E,V_S$ be the optimal function values of (SQK), (SQKE) and (SQKS) respectively. It is easy to see that $V_E\geq V\geq V_S.$ Because (SQK) and (SQKE) have different optimal function values, we have that $V_E>V\geq V_S.$ Now, assume by contradiction that there exists an optimal solution 
$Y_S:= (1,x^\top \,;\, x,X)$ 
of (SQKS) such that $a^\top Xa-\tau a^\top x> 0.$ Because $V_E>V,$ we have that there exists an optimal solution 
$Y:=(1,\hat{x}^\top \,;\, \hat{x},\widehat{X})$ 
of (SQK) such that $a^\top \widehat{X}a-\tau a^\top \hat{x}<0.$ Now, define $Y_E^t=tY+(1-t)Y_S.$ We have that there exists $t\in (0,1)$ such that $Y_E^t$ is feasible for (SQKE). Also, from $V_E>V\geq V_S,$ we have that the function value of $Y_E^t$ is strictly smaller than $V_E.$ This contradicts to the fact that $V_E$ is the optimal function value of (SQKE).
\end{proof}

\medskip
In order to solve (SQK), we can first solve (SQKS) to get an optimal solution $Y.$ If $Y$ is feasible for (SQK), then $Y$ is also an optimal solution of (SQK). Otherwise if $Y$ is not feasible for (SQK), then by Proposition~\ref{equiv1}, (SQK) and (SQKE) must have the same optimal function value. In this case, we only have to solve (SQKE) to get an optimal solution of (SQK). Note that (SQKS) is easy to solve because its low rank factorization only has the nonlinear constraint $\dd\(RR^\top\)=Re_1,$ which is equivalent to that $\dd\( \(2R-ee_1^\top\)\(2R-ee_1^\top\)^\top\)=e.$ Thus, solving the low rank version of (SQKS) is equivalent to minimizing a quadratic function on the following oblique manifold
\begin{equation}
\OB_{n,r}:=\left\{R\in \R^{n\times r}:\ \dd\(RR^\top\)=e\right\}.\notag
\end{equation}
This problem can be solved efficiently by a widely used toolbox called manopt (see \cite{manopt}). Since (SQK) can always be reduced to (SQKE), in the sequel, we will focus on analysing the property of (SQKE) instead of (SQK).

\section{Geometric properties of $\Ka$.}\label{Geom}
In this section, we focus on the geometric properties of the feasible region of (SQKELR), $\Ka$
defined in (\ref{Kamani}). Note that $\Ka$ is not empty because $0_{n\times r}\in \Ka.$ We call this point a trivial point because it is of little use in practice. Actually, this point corresponds to the case that we don't select any item in (QKP). 
\subsection{Regularity of $\Ka$.}\label{RegKa}
From now on, we say that a point $R\in \Ka$ is regular if the linear independent constraint qualification (LICQ) holds at $R.$ Otherwise we say that $R$ is non-regular. Note that LICQ is a sufficiently condition for a point to be smooth on an algebraic variety. If $R\in \Ka$ is regular, then $\Ka$ is a smooth manifold locally around $R$ and its tangent cone is a linear space, which is also called the tangent space. It is easy to check that for any $(a,\tau)$ satisfying Assumption~\ref{ass1},
 $0_{n\times r}$ is a non-regular point. The following proposition characterizes all the non-regular points on $\Ka.$

\begin{prop}\label{regular}
Suppose $(a,\tau)$ satisfy Assumption~\ref{ass1}. The non-regular points of $\Ka$ are given by
\begin{equation}\label{nreg}
\N_{n,r}:=\{0_{n\times r}\}\sqcup\left\{ ve_1^\top:\ v\in \{0,1\}^n,\ a^\top v=\tau\right\}.
\end{equation}
\end{prop}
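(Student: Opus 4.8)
The plan is to identify the non-regular points by writing down the Jacobian of the defining constraints of $\Ka$ and determining exactly when its rows fail to be linearly independent. There are $n+1$ constraints: the scalar constraint $g_0(R):=\norm{a^\top R}^2-\tau a^\top Re_1=0$, and the $n$ scalar constraints $g_i(R):=\(RR^\top\)_{ii}-\(Re_1\)_i=0$ for $i\in[n]$. A point $R\in\Ka$ is non-regular precisely when there exist scalars $(\lambda_0,\lambda_1,\dots,\lambda_n)$, not all zero, with $\lambda_0\,\nabla g_0(R)+\sum_{i=1}^n\lambda_i\,\nabla g_i(R)=0$. Computing these gradients in $\R^{n\times r}$: $\nabla g_0(R)=2aa^\top R-\tau a e_1^\top$, and $\nabla g_i(R)=2e_ie_i^\top R-e_ie_1^\top$ (here $e_i\in\R^n$). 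So the dependency condition becomes
\begin{equation}\label{dep}
\(2\lambda_0 aa^\top+2\,\dd(\lambda)\)R=\(\lambda_0\tau a+\dd(\lambda)e\)e_1^\top,
\end{equation}
where $\lambda=(\lambda_1,\dots,\lambda_n)^\top$. The task reduces to: find all $R\in\Ka$ for which \eqref{dep} admits a nonzero $(\lambda_0,\lambda)$.

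First I would dispose of the trivial point: if $R=0$ then \eqref{dep} holds with, say, $\lambda_0=0$ and $\lambda=e$ (or any nonzero choice making the right side vanish — actually one must check the right side; taking $\lambda_0=0$, $\lambda_i$ any nonzero vector with $\dd(\lambda)e=\lambda\ne 0$ fails, so instead pick a single $\lambda_i\ne 0$: then LHS $=0$ but RHS $=\lambda_ie_ie_1^\top\ne 0$ — so one needs $\lambda_0\tau a+\dd(\lambda)e=0$, i.e. $\lambda=-\lambda_0\tau a$, which is nonzero for $\lambda_0\ne0$; then LHS $=2\lambda_0(aa^\top-\tau\,\dd(a))\cdot 0=0$). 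So $0_{n\times r}$ is always non-regular, consistent with the remark in the text. Next, for $R\ne 0$: the columns of $R$ other than the first satisfy $\(2\lambda_0 aa^\top+2\,\dd(\lambda)\)R_{:,j}=0$ for $j\ge 2$, and the first column satisfies $\(2\lambda_0 aa^\top+2\,\dd(\lambda)\)R_{:,1}=\lambda_0\tau a+\dd(\lambda)e$. The key structural input is \emph{genericity}: for generic $(a,\tau)$ the matrix $M(\lambda_0,\lambda):=\lambda_0 aa^\top+\dd(\lambda)$ is, for $(\lambda_0,\lambda)\ne 0$, either invertible or has a one-dimensional kernel with a very constrained kernel vector, and one exploits that $a^\top v=\tau$ cuts out the relevant locus. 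I would argue that a nonzero $R$ can only be non-regular if all its columns but the first vanish and the first column $v:=Re_1$ satisfies both the feasibility relations (which for a rank-one $R=ve_1^\top$ read $a^\top v(a^\top v-\tau)=0$ and $v\circ v=v$, i.e. $v\in\{0,1\}^n$ and $a^\top v\in\{0,\tau\}$, forcing $a^\top v=\tau$ since $v\ne 0$ and using Assumption~\ref{ass1}) \emph{and} the Lagrange dependency; a short computation shows that at $R=ve_1^\top$ with $v\in\{0,1\}^n$, $a^\top v=\tau$ one can solve \eqref{dep} with a suitable nonzero $(\lambda_0,\lambda)$, so these points are indeed non-regular.

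The main obstacle — and the place where the word "generic" in the text is doing real work — is the reverse inclusion: showing that \emph{no other} $R\in\Ka$ is non-regular. Here one must rule out, for generic $(a,\tau)$, solutions of \eqref{dep} with $R$ of rank $\ge 2$, or of rank $1$ but not of the form $ve_1^\top$ with binary $v$. The strategy I would use is a dimension/transversality argument: the set of $(a,\tau,R,\lambda_0,\lambda)$ satisfying the feasibility equations together with \eqref{dep} and $(\lambda_0,\lambda)\ne 0$ is an algebraic set; projecting out $R,\lambda_0,\lambda$ one shows its image in $(a,\tau)$-space is a proper subvariety (of positive codimension) unless the $R$-component is forced into the claimed finite list. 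Concretely, if $R_{:,j}\ne 0$ for some $j\ge 2$ then $\det M(\lambda_0,\lambda)=0$; combined with $g_0,g_i$ this over-determines the system relative to the number of free parameters, so generically only the degenerate branch $R=ve_1^\top$ survives. I would also need to check that for $v\in\{0,1\}^n$ with $a^\top v=\tau$ the point $ve_1^\top$ genuinely lies in $\Ka$ (immediate: $\norm{a^\top R}^2=(a^\top v)^2=\tau^2=\tau\,a^\top v\cdot\!$, wait — $=\tau(a^\top v)=\tau a^\top Re_1$, and $\dd(RR^\top)=v\circ v=v=Re_1$), and that such points are nonempty-or-not doesn't matter for the statement. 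Finally I would assemble the disjoint union: the trivial point is never equal to any $ve_1^\top$ with $v\ne 0$, so the union in \eqref{nreg} is indeed disjoint, completing the characterization.
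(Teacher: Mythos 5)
Your setup of the LICQ failure condition is correct (your dependency relation is exactly (\ref{May_21_1}) with $\mu=\lambda$, $\lambda=\lambda_0$), and your verification of the easy direction is essentially right, though you leave the multiplier at $R=ve_1^\top$ as ``a short computation'' rather than exhibiting it (one can take $(\mu,\lambda)=\(\dd(a)(2v-e),-1/\tau\)$). The genuine gap is in the reverse inclusion. You propose to rule out other non-regular points by a genericity/transversality argument over the data $(a,\tau)$, asserting that ``the word generic in the text is doing real work'' here. It is not: Proposition~\ref{regular} is stated for \emph{every} $(a,\tau)$ satisfying Assumption~\ref{ass1}; genericity enters only in Corollary~\ref{regular1}, where the measure-zero condition $a^\top v=\tau$ is removed. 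A dimension count that shows the bad locus projects onto a proper subvariety of $(a,\tau)$-space could at best prove the corollary, not the proposition, and even as a sketch it is not carried out (no incidence variety is defined, no dimensions are counted, and the claim that $\lambda_0aa^\top+\dd(\lambda)$ has at most a one-dimensional kernel for nonzero $(\lambda_0,\lambda)$ is false in general, e.g.\ $\lambda_0=0$ with sparse $\lambda$). Your intermediate claim that non-regularity of a nonzero $R$ forces all columns except the first to vanish is precisely what must be proved, and nothing in the proposal establishes it.

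The paper's proof is direct and works for all admissible data, using two structural facts you do not exploit. First, feasibility rewrites as $2R-ee_1^\top\in\OB_{n,r}$, i.e.\ every row of $2R-ee_1^\top$ is a unit vector. From (\ref{May_21_1}) one then shows every $\mu_i\neq 0$: if some $\mu_i=0$, positivity of $a_i$ forces $\lambda\(2a^\top R-\tau e_1^\top\)=0$, hence $\dd(\mu)\(2R-ee_1^\top\)=0$ and so $\mu=0$ (unit rows), hence $\lambda\neq0$ and $a^\top R=\tfrac{\tau}{2}e_1^\top$, which gives $\norm{a^\top R}^2-\tau a^\top Re_1=-\tfrac{\tau^2}{4}<0$, contradicting $R\in\Ka$. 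With $\dd(\mu)$ invertible, $2R-ee_1^\top$ equals a rank-one matrix with unit rows, so $2R-ee_1^\top=ub^\top$ with $u\in\{-1,1\}^n$, $b\in\s^{r-1}$; note $R$ itself need not be rank one at this stage. Second, substituting back yields $b^\top=\alpha\(a^\top u\,b^\top+(a^\top e-\tau)e_1^\top\)$, and if $b$ were not parallel to $e_1$ this would force $a^\top e=\tau$, contradicting $e^\top a>\tau$ from Assumption~\ref{ass1}. Hence $b=e_1$, $R=ve_1^\top$ with $v\in\{0,1\}^n$, and feasibility together with $a>0$, $v\neq 0$ gives $a^\top v=\tau$. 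Without an argument of this kind (or some other complete case analysis valid for all $(a,\tau)$), your proposal does not prove the stated characterization of $\N_{n,r}$.
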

\begin{proof}
\pf
Since the trivial point $0_{n\times r}$ is non-regular, we only have to characterize the set of non-trivial and non-regular points of $\Ka.$ Assume that $R\in \Ka$ is non-trivial and non-regular. Because the LICQ doesn't hold at $R,$ we have that there exists nonzero $\(\mu,\lambda\)\in \R^n\times \R$ such that 
\begin{equation}\label{May_21_1}
\dd(\mu)\(2R-ee_1^\top\)+\lambda a\( 2a^\top R-\tau e_1^\top\)=0.
\end{equation}
Because $R\in \Ka,$ the constraint $\dd\(RR^\top\)=Re_1$ implies that $2R-ee_1^\top\in \OB_{n,r}.$ That is, every row of $2R-ee_1^\top$ is on the unit sphere of $\R^{1\times r}.$ If there exists $i\in [n]$ such that $\mu_i=0,$ then from (\ref{May_21_1}) and the fact that every entry of $a$ is positive, we have that $\lambda \( 2a^\top R-\tau e_1^\top\)=0.$ This implies that $\dd(\mu)\(2R-ee_1^\top\)=0$ and so $\mu=0.$ Because $\(\mu,\lambda\)$ is nonzero, we have that $\lambda\neq 0.$ Thus, $\( 2a^\top R-\tau e_1^\top\)=0$ and so $a^\top R=\frac{1}{2}\tau e_1^\top.$ It follows that $\|a^\top R\|^2-\tau a^\top Re_1=-\frac{\tau^2}{4}<0,$ which contradicts to the fact that $R\in \Ka.$ Thus we conclude the fact that for any $i\in [n],$ $\mu_i\neq 0.$

Now from (\ref{May_21_1}), we have that 
\begin{equation}\label{May_21_2}
2R-ee_1^\top+\dd\(\mu\)^{-1}\lambda a\big( 2a^\top R-\tau e_1^\top\big)=0.
\end{equation}
 Because $2R-ee_1^\top\in \OB_{n,r}$ and $\dd\(\mu\)^{-1}\lambda a\( 2a^\top R-\tau e_1^\top\)$ is rank 1, we have that $2R-ee_1^\top=u b^\top$ for some $u\in \{-1,1\}^n$ and $b\in {\rm S}^{r-1}$ such that the first nonzero entry of $b$ is positive. From (\ref{May_21_2}), we have that
\begin{equation}\label{May_21_3}
b^\top=\alpha \(a^\top u b^\top + (a^\top e -\tau) e_1^\top\),
\end{equation}
for some nonzero normalization factor $\alpha\in \R.$ If $b$ is not parallel to $e_1$, then from (\ref{May_21_3}), we have that $a^\top e-\tau=0,$ which contradicts the fact that $a^\top e>\tau.$ Thus $b$ is parallel to $e_1$, and hence $b=e_1,$ {{since} $b\in{\rm S}^{r-1}$ and its first nonzero entry is positive.} Therefore,  we have that $R=\(\frac{e+u}{2}\)e_1^\top.$ Define $v:=\frac{e+u}{2},$ we have that $v\in \{0,1\}^n$ and $R=ve_1^\top.$ Because $R\in \Ka,$ we have that 
\begin{equation}\label{May_21_4}
0=\|a^\top R\|^2-\tau a^\top Re_1=\big(a^\top v\big)^2-\tau a^\top v.
\end{equation}
Because $R$ is non-trivial and every entry of $a$ is positive, we have that $a^\top v> 0.$ This together with (\ref{May_21_4}) implies that $a^\top v=\tau.$ Therefore, $R\in \N_{n,r}.$

For any nontrivial point $R=ve_1^\top\in \N_{n,r}.$ Because $v\in \{0,1\}^n$ and $a^\top v=\tau,$ we have that $R=ve_1^\top\in \Ka.$ Thus, we only have to check that $ve_1^\top$ is a non-regular point of $\Ka.$ This can be easily verified by choosing $\(\mu,\lambda\)=\( \dd(a)\(2v-e\),\frac{-1}{\tau}\)$ in (\ref{May_21_1}).
\end{proof}

\medskip
{From Proposition~\ref{regular}, one can see that the non-trivial non-regular points of $\Ka$ corresponds to the integer feasible solutions of (QKP) such that the knapsack upper bound are attained.} Proposition~\ref{regular} has the following corollary, which says that $\Ka$ is likely to be smooth everywhere except for the trivial point.

\begin{coro}\label{regular1}
For a generic $(a,\tau)\in \R^n\times \R$ that satisfies Assumption~\ref{ass1}, $\N_{n,r}=\{0_{n\times r}\}$.
\end{coro}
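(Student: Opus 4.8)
The plan is to observe that $\N_{n,r}$ differs from $\{0_{n\times r}\}$ precisely when the linear equation $a^\top v=\tau$ has a $0/1$ solution $v$, and that this happens only for $(a,\tau)$ lying in a finite union of hyperplanes, hence in a set of measure zero (a proper algebraic subvariety).

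First I would record that, by Proposition~\ref{regular}, the non-trivial part of $\N_{n,r}$ in (\ref{nreg}) consists of the points $ve_1^\top$ with $v\in\{0,1\}^n$ and $a^\top v=\tau$; since Assumption~\ref{ass1} forces $\tau>0$, the zero vector never satisfies $a^\top v=\tau$, so this part is indexed by $v\in\{0,1\}^n\setminus\{0_n\}$. For each such fixed $v$, the set $B_v:=\{(a,\tau)\in\R^n\times\R:\ a^\top v-\tau=0\}$ is the zero locus of a nonzero affine form (the coefficient of $\tau$ is $-1$), hence a proper hyperplane in $\R^{n+1}$: closed, nowhere dense, and of Lebesgue measure zero.

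Next, since $\{0,1\}^n$ is finite, $B:=\bigcup_{v\in\{0,1\}^n\setminus\{0_n\}}B_v$ is a finite union of such hyperplanes; equivalently, $B$ is the zero set of the single polynomial $p(a,\tau):=\prod_{v\in\{0,1\}^n\setminus\{0_n\}}(a^\top v-\tau)$, which is not identically zero (for instance the factor indexed by $v=e_1$ equals $a_1-\tau\not\equiv 0$, and $\R[a,\tau]$ is an integral domain). Thus $B$ is a proper algebraic subvariety of $\R^n\times\R$, and its complement is open, dense, and of full measure. For any $(a,\tau)\notin B$ that also satisfies Assumption~\ref{ass1}, there is no $v\in\{0,1\}^n$ with $a^\top v=\tau$, so $\N_{n,r}=\{0_{n\times r}\}$, which is the assertion for generic data.

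There is essentially no hard step; the only point that warrants a word of care is that genericity is taken within the region cut out by Assumption~\ref{ass1}. Since that region is nonempty and open, removing the lower-dimensional (measure-zero) set $B$ still leaves a dense full-measure subset of it, so the corollary is genuinely non-vacuous rather than trivially true.
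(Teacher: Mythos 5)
Your proposal is correct and follows essentially the same route as the paper: the bad parameters are confined to the finite union of hyperplanes $\{a^\top v=\tau\}$ over $v\in\{0,1\}^n$, a measure-zero set, while the region cut out by Assumption~\ref{ass1} is open with positive measure. Your extra remarks (excluding $v=0$ via $\tau>0$, and packaging the union as the zero set of a single nonzero polynomial) are fine elaborations but do not change the argument.
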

\begin{proof}
\pf
First, note that the set of vectors $(a,\tau)$ satisfying Assumption~\ref{ass1} is an open set in $\R^n\times \R$ with positive measure. From (\ref{nreg}), it is easy to see that the set of $(a,\tau)$'s that satisfies Assumption 1 and $\N_{n,r}\neq \{0_{n\times r}\}$ is the following set
\begin{equation}\label{nreg-W}
\W_{n,\tau}:=\cup_{v\in \{0,1\}^n}\left\{ (a,\tau)\in \R^n\times \R:\ a^\top v=\tau, \forall\ i\in [n]\ 0<a_i<\tau,\ e^\top a>\tau\right\}.
\end{equation}
{Note that the affine constraint $a^\top v=\tau$ implies that $\W_{n,\tau}$ is the union of finitely many sets of measure zero in $\R^n\times \R$.} Therefore, we have that for a generic $(a,\tau)\in \R^n\times \R$ that satisfies Assumption~\ref{ass1}, $\N_{n,r}=\{0_{n\times r}\}.$
\end{proof}

\subsection{Local geometric properties at non-regular points of $\Ka$.}\label{GeoKa}

Although Corollary~\ref{regular1} implies that there are no non-trivial non-regular points of $\Ka$ with probability one with respect to the data $\(a,\tau\)$ satisfying Assumption~\ref{ass1}, for theoretical completeness, we will discuss the local geometric properties at non-regular points of $\Ka$ in this subsection. Before presenting our result, we need the definition of the tangent cone and second order tangent set. For any closed set $S\subset \R^n$ and $x\in S,$ the inner tangent cone (see \cite{Perturbation} section 2) $\T^i_S(x)$ is defined as follows:
\begin{equation}\label{tancone}
\T^i_S(x):=\left\{ h\in \R^n:\ \dist\(x+th,S\)=o(t),\ t\geq 0\right\},
\end{equation}
{where $\dist(x,S):=\inf_{y\in S}\|x-y\|.$} For any vector $h\in \T^i_S(x),$ the inner second order tangent set is defined as follows (see \cite{Perturbation} definition 3.28):
\begin{equation}\label{istan}
\T^{i,2}_S(x,h):=\left\{ w\in \R^n:\ {\dist}\Big(x+th+\frac{1}{2}t^2w,S\Big)=o(t^2),t\geq 0\right\}.
\end{equation}

Note that there are also other kinds of tangent cones and second order tangent sets (see section 2 and section 3 in \cite{Perturbation} for more details). However, in this paper, we only use the definition of inner tangent cone and inner second order tangent set. Thus, for convenience, we use $\T_S(x)$ and $\T^2_S(x,h)$ to denote $\T^i_S(x)$ and $\T^{i,2}_S(x,h)$ respectively.

\begin{prop}\label{pgct}
Suppose $(a,\tau)$ satisfies Assumption~\ref{ass1} and $r\geq 2.$ Let $R=ve_1^\top\in \N_{n,r},$ where $v\in \{0,1\}^n.$ Then the tangent cone of $\Ka$ at $R$ satisfies
\begin{equation}\label{May_22_4}
\widetilde{\T}:=\left\{ [0,H]:\< aa^\top-\sigma_v\tau\cdot\dd\( a\circ d\),HH^\top\>=0,\ H\in \R^{n\times (r-1)}\right\}\subset \T_{\Ka}\(R\),
\end{equation}
where $d=2v-e$ and $\sigma_v:=1$ if $v\neq 0,$ and $\sigma_v:=-1$ if $v=0.$ In addition, for any $h=[0,H]\in \widetilde{\T},$ $w=[-2\dd\(HH^\top\)\circ d,\,0]\in \T^2_{\Ka}\(R,h\).$
\end{prop}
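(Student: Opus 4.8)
The plan is to prove both assertions at once by producing, for each $h=[0,H]\in\widetilde{\T}$, a curve $\gamma\colon[0,\delta)\to\Ka$ with $\gamma(0)=R$ and $\gamma(t)=R+th+\tfrac12t^2w+o(t^2)$, where $w=[-2\,\dd(HH^\top)\circ d,\,0]$. Granting such a $\gamma$, the inclusion $\gamma(t)\in\Ka$ gives $\dist(R+th,\Ka)\le\norm{\gamma(t)-R-th}=O(t^2)=o(t)$ and $\dist\!\big(R+th+\tfrac12t^2w,\Ka\big)\le\norm{\gamma(t)-R-th-\tfrac12t^2w}=o(t^2)$, which are exactly the defining conditions in (\ref{tancone})--(\ref{istan}) for $h\in\T_{\Ka}(R)$ and $w\in\T^2_{\Ka}(R,h)$.

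To construct $\gamma$, write the two equations cutting out $\Ka$ as $g_1(R):=\norm{a^\top R}^2-\tau a^\top Re_1$ and $g_2(R):=\dd(RR^\top)-Re_1$, and let $c_1,\dots,c_r$ denote the columns of $R$. Coordinatewise, $g_2(R)=0$ reads $(c_1)_i^2-(c_1)_i+\sum_{j\ge2}(c_j)_i^2=0$; since $v\in\{0,1\}^n$ we may solve this for the first column on the branch through $v$, i.e.\ $(c_1)_i=\psi_{v_i}\!\big(\sum_{j\ge2}(c_j)_i^2\big)$ with $\psi_0(q):=\tfrac{1-\sqrt{1-4q}}{2}$ and $\psi_1:=1-\psi_0$, both analytic on $[0,\tfrac14)$. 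This defines a smooth map $\Phi$ from a neighbourhood of $0$ in $\R^{n\times(r-1)}$ (nonempty since $r\ge2$) into $\{g_2=0\}$, with $\Phi(0)=R$. Put $\gamma_0(t):=\Phi(tH)$: its last $r-1$ columns are exactly $tH$, and since $\psi_1=1-\psi_0$ its first column equals $v-d\circ\psi_0\!\big(t^2\dd(HH^\top)\big)=v-t^2\,\dd(HH^\top)\circ d+O(t^4)$, so $\gamma_0(t)=R+th+\tfrac12t^2w+O(t^4)$. Using $a^\top v\in\{0,\tau\}$ and $2a^\top(\gamma_0(t)e_1)-\tau=\sigma_v\tau+O(t^2)$, expanding $g_1$ along $\gamma_0$ gives
\begin{equation*}
g_1(\gamma_0(t))=t^2\,\big\langle\, aa^\top-\sigma_v\tau\,\dd(a\circ d),\ HH^\top\,\big\rangle+O(t^4)=O(t^4),
\end{equation*}
the last step by the definition of $\widetilde{\T}$; in particular $g_1\circ\gamma_0$ vanishes to order $\ge4$ in $t$.

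Next I would correct $\gamma_0$ within $\{g_2=0\}$ so that $g_1$ vanishes identically. Setting $P:=aa^\top-\sigma_v\tau\,\dd(a\circ d)$, one computes $D(g_1\circ\Phi)(tH)[\Delta]=2t\,\langle PH,\Delta\rangle+o(t)$. Case $PH\neq0$: pick $\Delta_0$ with $\langle PH,\Delta_0\rangle=1$ and let $F(t,s):=g_1\big(\Phi(tH+s\Delta_0)\big)$; then $F$ is smooth, $F(\cdot,0)=g_1\circ\gamma_0$ is divisible by $t^4$, and $\partial_sF(t,0)=\nabla(g_1\circ\Phi)(tH)\cdot\Delta_0$ is divisible by $t$ and equals $2t+o(t)$, so $G(t,\theta):=t^{-4}F(t,t^3\theta)$ extends smoothly across $t=0$ with $G(0,\cdot)$ affine of slope $2$. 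By the implicit function theorem there is a smooth $\theta(t)$ with $G(t,\theta(t))=0$, whence $\gamma(t):=\Phi\big(tH+t^3\theta(t)\Delta_0\big)\in\Ka$ and $\gamma(t)=\gamma_0(t)+O(t^3)=R+th+\tfrac12t^2w+o(t^2)$. Case $PH=0$: since $a_i>0$, every column of $H$ is then a multiple of $d$, say $H=d\beta^\top$, and $h\in\widetilde{\T}$ becomes $\norm{\beta}^2(a^\top d)(a^\top d-\sigma_v\tau)=0$, which by Assumption~\ref{ass1} ($e^\top a>\tau$) forces $a^\top d=0$; in particular this case does not arise when $v=0$, where $a^\top d=-e^\top a<0$. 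When $a^\top d=0$, all rows of $H$ have equal norm, so $\psi_0\!\big(t^2\dd(HH^\top)\big)$ is a constant vector and $a^\top(\gamma_0(t)e_1)=a^\top v$, while $\sum_{j\ge2}(a^\top c_j(t))^2=t^2\norm{\beta}^2(a^\top d)^2=0$; hence $g_1(\gamma_0(t))=(a^\top v)(a^\top v-\tau)=0$ and $\gamma(t):=\gamma_0(t)$ already works.

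The square-root expansions and the two differential computations are routine bookkeeping. The real obstacle is the correction step: because $R$ is a non-regular point, the gradient of $g_1$ restricted to $\{g_2=0\}$ vanishes at $R$, so the plain implicit function theorem does not apply at $t=0$ and must be resurrected through the rescaling $s=t^3\theta$ (a blow-up), together with the separate — but elementary — treatment of the degenerate directions $H$ with $PH=0$.
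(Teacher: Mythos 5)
Your construction coincides with the paper's own proof: both solve the diagonal constraint for the first column via the square-root branch through $v$, expand to see that membership in $\widetilde{\T}$ kills the $t^2$ term of the knapsack constraint, and remove the remaining $O(t^4)$ residual by an order-$t^3$ correction justified through a rescaled implicit function theorem (the paper corrects with a full matrix $t^3W$ and a surjective-IFT lemma, you with a single direction $t^3\theta\Delta_0$ and the scalar IFT --- an immaterial difference). The only blemish is your degenerate case $PH=0$: the identity $\norm{\beta}^2(a^\top d)(a^\top d-\sigma_v\tau)=0$ forces $\beta=0$ (i.e.\ $H=0$, trivial) \emph{or} $a^\top d=0$, and in fact $PH=0$ gives $\beta=\sigma_v H^\top a/\tau$, which under Assumption~\ref{ass1} forces $H=0$ outright (this is the paper's Claim~\ref{Hsmo}, and note that for $H=d\beta^\top$ with $\beta\neq0$ and $a^\top d=0$ one has $PH=-\sigma_v\tau\, a\beta^\top\neq0$), so your $a^\top d=0$ sub-case is vacuous; since the direct argument you give for it is nevertheless correct and the overlooked case $H=0$ is trivial, nothing breaks.
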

\begin{proof}
\pf
For any $[0,H]\in \widetilde{\T},$ define the matrix valued function $R(t,W):(-\delta,\delta)\times \R^{n\times (r-1)}\rightarrow \R^{n\times r}$ such that
\begin{equation}\label{May_22_5}
R(t,W):=\left[ \( v-\frac{e}{2}\)\circ \sqrt{e-4\dd\big(\( tH+t^3W\)\( tH+t^3W\)^\top\big)}+\frac{e}{2},\ tH+t^3W\right],
\end{equation}
where the square-root is taken componentwise and $\delta>0$ is sufficiently small for the square root to be well-defined. {Note that the first column of $R(t,W)$ {is derived} from solving the equation $\dd\(RR^\top\)-Re_1=0.$} It is easy to check that $\dd\(R(t,W)R(t,W)^\top\)-R(t,W)e_1=0$. By using the Taylor expansion of the square root term of $R(t,W),$ we get
\begin{align}\label{May_22_6}
R(t,W)=&\Big[ v-\(t^2\dd\big(HH^\top\big)+2t^4 \dd\big(HW^\top\big)+t^4\dd^2\big(HH^\top\big)\)\circ d+t^6g(t,W),\notag \\
&\;\;tH+t^3W \Big],
\end{align}
where $g(t,W)$ is a smooth function. 
In order to get $R(t,W)\in \Ka,$ we only have to guarantee that 
\begin{equation}\label{May_22_7}
4\norm{ a^\top R(t,W)}^2-4\tau a^\top R(t,W)e_1=\norm{ 2a^\top R(t,W)-\tau e_1^\top}^2-\tau^2=0.
\end{equation}
Note that from the definition of $\N_{n,r}$ and $\sigma_v$, we have that $2a^\top v-\tau=\sigma_v \tau.$ By using (\ref{May_22_6}), we have that
\begin{align}\label{May_22_8}
&\| 2a^\top R(t,W)-\tau e_1^\top\|^2-\tau^2\notag \\
&=-4t^2\sigma_v \tau\cdot a^\top \big(\dd\big( HH^\top\big)\circ d\big)+4t^4\( a^\top \big(\dd\big( HH^\top\big)\circ d\big)\)^2
-8t^4\sigma_v\tau\cdot a^\top \big(\dd\big( HW^\top\big)\circ d\big)\notag \\[3pt]
&\quad -4t^4\sigma_v \tau\cdot a^\top \big(\dd^2\big(HH^\top\big)\circ d\big)
+8t^4 \inprod{a^\top H}{a^\top W} +4t^2\| a^\top H\|^2+t^6k\(t,W\)\notag \\[3pt]
&=8t^4 \<aa^\top H-\sigma_v \tau\cdot\dd\(a\circ d\) H,W \>
\notag \\
&\quad+4t^4\( \big( a^\top \big(\dd\big( HH^\top\big)\circ d\big)\big)^2
-\sigma_v \tau\cdot a^\top \big(\dd^2\big(HH^\top\big)\circ d\big)\)+t^6k(t,W),
\end{align}
where the second equality comes from that $\< aa^\top-\sigma_v\tau\cdot\dd\( a\circ d\),HH^\top\>=0.$ Also, $k(t,W)$ is a smooth function. 

\begin{claim}\label{Hsmo}
If $H\neq 0,$ then $aa^\top H-\sigma_v \tau\cdot\dd\(a\circ d\) H\neq 0.$
\end{claim}
\proof{\it Proof of Claim~\ref{Hsmo}.}
Assume by contradiction that $aa^\top H-\sigma_v \tau\cdot\dd\(a\circ d\) H= 0.$ Because every entry of $a$ is positive, we have that $H=\frac{ da^\top H}{\tau\sigma_v}.$ If $a^\top H=0,$ then $H=\frac{ da^\top H}{\tau\sigma_v}=0,$ which contradicts to the fact that $H\neq 0.$ Thus, $a^\top H\neq 0.$ Because $H=\frac{ da^\top H}{\tau\sigma_v},$ we have that $a^\top H=a^\top \frac{da^\top H}{\tau\sigma_v},$ which implies that 
 $a^\top d=\tau\sigma_v.$ If $v\neq 0,$ then $\sigma_v=1$, and from (\ref{nreg}), $a^\top v=\tau.$ This together with $a^\top d=\tau$ and $d=2v-e$ implies that $a^\top e=\tau,$ which contradicts to the fact that $a^\top e>\tau.$ If $v=0,$ then $\sigma_v=-1$ and $d=2v-e=-e.$ It follows that $a^\top e=-a^\top d=-\tau\sigma_v=\tau.$ Similarly, we get a contradiction.  $\renewcommand\qedsymbol{\ding{122}}$ \endproof

Now we define the following function $F:(-\delta,\delta)\times \R^{n\times (r-1)}\rightarrow \R$ such that
\begin{align}\label{May_22_9}
&F(t,W):=8\<aa^\top H-\sigma_v \tau\cdot\dd\(a\circ d\) H,W \>
\notag \\
&\qquad\qquad +4\Big( \big( a^\top \big(\dd( HH^\top)\circ d\big)\big)^2-\sigma_v \tau\cdot a^\top \big(\dd^2(HH^\top)\circ d\big)\Big)+t^2k(t,W).
\end{align}
If $H\neq 0,$ then from Claim~\ref{Hsmo}, there exists $W_0\in \R^{n\times (r-1)}$ such that $F(0,W_0)=0.$ Moreover, since $\nabla_W F(0,W_0)=8\(aa^\top H-\sigma_v \tau\cdot\dd\(a\circ d\) H\)\neq 0.$ From Lemma~\ref{implicit}, we have that there exists a continuously differentiable mapping $\widehat{W}:(-\epsilon,\epsilon)\rightarrow \R^{n\times (r-1)}$ such that $\widehat{W}(0)=W_0$ and $F(t,\widehat{W}(t))=0.$ This implies that $R(t,\widehat{W}(t))\in \Ka$ for any $t\in (-\epsilon,\epsilon).$ Because 
$\dist\big( R(t,\widehat{W}(t)),\left[ v-t^2\dd\(HH^\top\)\circ d,tH\right]\big)=O(t^3),$ we have that $h=[0,H]\in \T_{\Ka}\(R\)$ and 
$w=[-2\dd\(HH^\top\)\circ d,0]\in \T_{\Ka}^2\(R,h\).$ 
If $H=0,$ then it is obvious that $0=[0,0]\in \T_{\Ka}\(R\)$ and $w=[-2\dd\(HH^\top\)\circ d,0]=0\in \T_{\Ka}^2\(R,0\).$
\end{proof}

\medskip
\begin{rem}\label{complete}
In order to save space, we don't provide the complete local geometric properties at non-regular points of $\Ka$ in Proposition~\ref{pgct}. {The converse of (\ref{May_22_4}) is also correct and it can be proved similarly based on the Taylor expansion used the in the proof of Proposition~\ref{pgct}.} However, {the above} properties are already enough for our later theoretical analysis and algorithmic design.
\end{rem}

\section{Equivalence between (SQKE) and (SQKELR).}\label{Equiv}
{In this section, we will study the equivalence between (SQKE) and (SQKELR). We will first derive a new rank condition under which (SQKE) and (SQKELR) have the same global optimal value. After that, we will discuss when a second order stationary point of (SQKLRE) is also a global optimal solution.}
\subsection{A new rank-bound for (SQKELR).}\label{rankbound}
In this subsection, we discuss the equivalence between (SQKE) and (SQKELR). As we mentioned in the introduction, (SQKE) and (SQKELR) are equivalent when $r\geq \sqrt{2(n+2)}.$ Here, we will provide a new rank-bound which relies on the structure of the matrix $C.$ Before showing our main result, we present some background knowledge on the minimum rank problem of a graph. Consider a simple undirected graph $G$ with vertices set $V(G)=[n]$ and edges set $E(G)\subset \binom{[n]}{2}.$ We define the following set
\begin{equation}\label{QG}
\Q(G):=\left\{ A\in \S^{n}:\ \forall i\neq j,\ A_{ij}\neq 0\Leftrightarrow ij\in E(G)\right\},
\end{equation}
which is the set of symmetric matrices whose sparsity patterns are specified by the adjacency matrix of $G.$
The $minimum\ rank$ of $G$ (see \cite{MG2}) is defined as follows:
\begin{equation}\label{MG}
\mr(G):=\min\left\{ \rr\(A\):\ A\in \Q(G)\right\}.
\end{equation}
For a given graph $G$, determining $\mr(G)$ is a difficult problem that has received considerable attention (see \cite{MG1} for a survey). In the next theorem, we give a new rank-bound of (SQKELR) using $\mr(G).$ 
\begin{theo}\label{MGrank}
Consider the problem (SQKE), and $(a,\tau)$ satisfies Assumption~\ref{ass1}. Suppose $C\in \Q(G)$ for some graph $G$ and there exists another graph $G_1$ such that $V(G_1)=[n]$ and $E(G)\subset E(G_1).$ Then there is an optimal solution 
$Y^*=(1,{x^*}^\top\,;\,x^*,X^*)$
of (SQKE) such that $\rr\(Y^*\)\leq n+2-\mr(G_1).$
\end{theo}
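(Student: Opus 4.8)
The plan is to reduce first to the case where the profit matrix has sparsity pattern exactly $G_1$, and then to recognize the problem as a minimum-rank positive semidefinite matrix completion problem on which a structured rank-reduction argument can be run. For the reduction, since $E(G)\subseteq E(G_1)$, pick $D\in\S^n$ supported exactly on the off-diagonal positions of $E(G_1)\setminus E(G)$ and set $C_\varepsilon:=C+\varepsilon D$; then $C_\varepsilon\in\Q(G_1)$ for all small $\varepsilon\neq 0$. The feasible region of (SQKE) does not depend on the objective and is compact, and the objective depends continuously (indeed uniformly) on the data, so optimal solutions of the $C_\varepsilon$-problems have limit points as $\varepsilon\to 0$ that are optimal for the original problem, while $\rr(\cdot)$ is lower semicontinuous. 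Hence it suffices to prove the theorem when $C\in\Q(G_1)$; from now on write $H:=G_1$ and $q:=\mr(H)$.

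Next, observe that because $C$ is supported on $E(H)$ together with the diagonal, the objective $-\inprod{C}{X}$ of a feasible $Y=(1,x^\top;x,X)$ depends only on $x=\dd(X)$ and on the entries $X_{ij}$ with $ij\in E(H)$. So, starting from any optimal $Y^*=(1,{x^*}^\top;x^*,X^*)$, it is enough to produce a positive semidefinite $\widehat Y=(1,{x^*}^\top;x^*,\widehat X)$ with $\dd(\widehat X)=x^*$, with $\widehat X_{ij}=X^*_{ij}$ for every $ij\in E(H)$, and with $a^\top\widehat Xa=\tau a^\top x^*$, such that $\rr(\widehat Y)\le n+2-q$; any such $\widehat Y$ is automatically feasible and optimal for (SQKE). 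This is a minimum-rank positive semidefinite completion problem: the specified positions are exactly the edges of $\widehat H$ — the graph on $\{0,1,\dots,n\}$ obtained from $H$ by appending a vertex $0$ adjacent to all of $[n]$ — together with the whole diagonal, with one extra affine constraint, namely the knapsack equation. Using that $\rr(Y^*)=1+\rr\big(X^*-x^*{x^*}^\top\big)$, I would run the usual rank-reduction scheme: given a current feasible completion of rank $\rho$, factor it as $PP^\top$ and seek a nonzero symmetric $D$ with $\operatorname{range}(D)\subseteq\operatorname{range}(PP^\top)$ that vanishes at all specified positions (this keeps all specified entries, hence the objective, unchanged) and satisfies $a^\top D_X a=0$ with $D_X$ the lower-right block (this keeps the knapsack equation); then $PP^\top+tD$ remains feasible and optimal, and for a suitable $t\neq 0$ its rank drops. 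Iterating, the rank decreases as long as such a $D$ exists.

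The crux — and the step I expect to be the real obstacle — is to show that such a nonzero direction $D$ exists whenever $\rho>n+2-q$. This is precisely where the definition $\mr(H)=\min\{\rr(A):A\in\Q(H)\}$ enters: if no admissible rank-reducing $D$ existed, the unspecified block of the completion would be rigidly pinned down by its entries on $\widehat H$, and unwinding this rigidity forces a nonzero symmetric matrix whose support is a subgraph of $H$ and whose rank is too small, contradicting $q=\mr(H)$; the bookkeeping for the extra knapsack constraint and for the augmenting vertex $0$ is what accounts for the two units in the bound $n+2-q$. A possibly cleaner route is via SDP duality: (SQKE) satisfies Slater's condition — one can build a strictly feasible point from Assumption~\ref{ass1} in the spirit of the proof of Proposition~\ref{equiv} — so the dual is attained, and complementary slackness gives $\operatorname{range}\big(X^*-x^*{x^*}^\top\big)\subseteq\ker Z_X^*$ for the $n\times n$ block $Z_X^*$ of a dual-optimal slack, whence $\rr(Y^*)=1+\rr\big(X^*-x^*{x^*}^\top\big)\le n+1-\rr(Z_X^*)$; it then remains to choose the dual optimum with $\rr(Z_X^*)\ge q-1$, which one reads off from the forced sparsity pattern of $Z_X^*=-C-\dd(\mu)-\theta\,aa^\top$ together with $C\in\Q(H)$.
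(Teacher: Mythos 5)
Your opening reduction (perturb $C$ to $C_\varepsilon\in\Q(G_1)$, use compactness of the fixed feasible set and lower semicontinuity of rank to pass to the limit) is exactly the paper's first step. But your primary route --- recasting the problem as a PSD completion on $G_1$ plus one linear constraint and running a rank-reduction scheme --- has a genuine gap precisely at the point you flag: the existence of a nonzero admissible direction $D$ whenever the rank exceeds $n+2-\mr(G_1)$ does not follow from any dimension count (counting the linear conditions on $W\in\S^\rho$ with $D=PWP^\top$ gives a Barvinok--Pataki-type bound governed by $|E(G_1)|$, not by $\mr(G_1)$), and the ``rigidity forces a low-rank matrix supported on $G_1$'' heuristic is not an argument. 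To make $\mr(G_1)$ enter one needs a stationarity or dual certificate, at which point the rank-reduction framing is doing no work.

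Your fallback via SDP duality, however, is essentially correct and does yield the theorem, with one detail to fill in: strict feasibility of (SQKE) is not quite ``in the spirit of Proposition~\ref{equiv}'' (a convex combination with $ee^\top$ need not be positive definite), but it is easy, e.g.\ $x=te$, $X=\dd(x)+ct\,(ee^\top-I)$ with $c=(\tau e^\top a-\norm{a}^2)/((e^\top a)^2-\norm{a}^2)\in(0,1)$ and $t>0$ small, using $0<a_i<\tau<e^\top a$. Then the dual is attained with zero gap, complementarity gives $(X^*-x^*{x^*}^\top)Z^*=0$ where the $n\times n$ dual-slack block is forced to be $Z^*=-C-\dd(\mu)-\lambda aa^\top$ (you noted this), and since $-C-\dd(\mu)\in\Q(G_1)$ one gets $\rr(Z^*)\geq \mr(G_1)-1$ for \emph{every} dual optimum (no selection needed), hence $\rr(Y^*)=1+\rr(X^*-x^*{x^*}^\top)\leq n+2-\mr(G_1)$ for every optimal $Y^*$ of the perturbed problem; this passes to the limit. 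In substance this is the same computation as the paper's, but the paper avoids Slater and dual attainment altogether: it factorizes an optimal $Y_k$ of the perturbed problem as $\begin{pmatrix}e_1&R^\top\end{pmatrix}^\top\begin{pmatrix}e_1&R^\top\end{pmatrix}$, notes that $\rr(Y_k)>n+2-\mr(G_1)\geq 3$ would make $R$ an optimal point of (SQKELR) with $\rr(R)\geq 3$, hence regular by Proposition~\ref{regular}, writes the first-order KKT identity whose right-hand side $\lambda a(2a^\top R-\tau e_1^\top)$ has rank at most one, and applies Sylvester's rank inequality together with $\rr(C_k+\dd(\mu))\geq\mr(G_1)$ to reach a contradiction. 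So: drop the completion route, supply the strictly feasible point, and your duality argument stands as a valid (slightly different, convex-dual) proof.
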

\begin{proof}
\pf
Because $C\in \Q(G)$ and $E(G)\subset E(G_1),$ there exists a matrix sequence $\{C_k\}\subset \S^n$ such that for any $k\in \mathbb{N}^+$, $C_k\in \Q(G_1)$ and $\lim_{k\rightarrow \infty}\|C_k-C\|=0.$ Let (SQKE)$_k$ be the optimization problem obtained by replacing $C$ with $C_k$ in (SQKE). Suppose 
$Y_k=(1,{x_k^*}^\top\,;\,x_k^*,X_k^*)$ 
is an optimal solution of problem (SQKE)$_k.$ From the compactness of the feasible region of (SQKE), and $\lim_{k\rightarrow \infty}\|C_k-C\|=0,$ we have that any accumulation point of $\{Y_k\}$ is an optimal solution of (SQKE). Without loss of generality, we may assume that $Y_{i_k}\rightarrow Y^*$ for some convergent sequence $\{Y_{i_k}\}$. Thus, 
$Y^*=(1,{x^*}^\top\,;\,x^*,X^*)$ 
is an optimal solution of (SQKE). 

\begin{claim}\label{rankY}
For any $k\in \mathbb{N}^+,$ $\rr\(Y_{i_k}\)\leq n+2-\mr(G_1).$
\end{claim}
\proof{\it Proof of Claim~\ref{rankY}.}
Note that $\mr(G_1)\leq |G_1|-1=n-1$  (see observation 1.2 of \cite{MG1}). Assume by contradiction that there exists $k\in \mathbb{N}^+$ such that $\rr\(Y_{i_k}\)> n+2-\mr(G_1).$ Then we have that $\rr\(Y_{i_k}\)\geq 4.$ Let (SQKELR)$_{i_k}$ be the low rank formulation of (SQKE)$_{i_k}$ with $r=\rr\(Y_{i_k}\).$  Because $Y_{i_k}$ the an optimal solution of (SQKE)$_{i_k},$ we have that (SQKELR)$_{i_k}$ is equivalent to (SQKE)$_{i_k}.$ Moreover, consider the low rank factorization $Y_{i_k}=\begin{pmatrix} e_1 & R_{i_k}^\top\end{pmatrix}^\top
\begin{pmatrix} e_1 & R_{i_k}^\top\end{pmatrix}$
in (SQKELR)$_{i_k}.$ Then $R_{i_k}$ is an optimal solution of (SQKELR)$_{i_k}$ such that $\rr\(R_{i_k}\)\geq \rr\( Y_{i_k}\)-1\geq 3.$  Because $\rr\(R_{i_k}\)\geq 3$,
by Proposition~\ref{regular}, it satisfies the LICQ of (SQKELR)$_{i_k}$. Thus $R_{i_k}$ satisfies the following first order KKT condition of (SQKELR)$_{i_k}$
\begin{equation}\label{Mayy_22_1}
-2\(C_{i_k}+\dd\(\mu\)\)R_{i_k}+\mu e_1^\top-\lambda a\big(2a^\top R_{i_k}-\tau e_1^\top\big)=0,
\end{equation}
for some $\(\mu,\lambda\)\in \R^n\times \R.$ 
Rearranging (\ref{Mayy_22_1}), we get
\begin{equation}\label{Mayy_22_2}
\big(\mu,-2\(C_{i_k}+\dd\(\mu\)\)\big)\begin{pmatrix}e_1^\top \\R_{i_k}\end{pmatrix}=\lambda a\big(2a^\top R_{i_k}-\tau e_1^\top\big).
\end{equation}
Because $C_{i_k}\in \Q(G_1),$ we have that
\begin{equation}\label{Mayy_22_3}
\rr\Big( \big(\mu,-2\(C_{i_k}+\dd\(\mu\)\)\big)\Big)\geq 
\rr\Big( -2\(C_{i_k}+\dd\(\mu\)\)\Big)\geq \mr\(G_1\).
\end{equation}
Applying (\ref{Mayy_22_3}) in (\ref{Mayy_22_2}), we have that
\begin{align}\label{Mayy_22_4}
&1\geq \rr\( \big(\mu,-2\(C_{i_k}+\dd\(\mu\)\)\big)\begin{pmatrix}e_1^\top \\R_{i_k}\end{pmatrix} \)\notag \\
&\geq \rr\Big( \big(\mu,-2\(C_{i_k}+\dd\(\mu\)\)\big)\Big)+\rr\( \begin{pmatrix}e_1^\top \\R_{i_k}\end{pmatrix} \)-(n+1), \notag \\
&\geq \mr(G_1)+\rr\( \begin{pmatrix}e_1^\top \\R_{i_k}\end{pmatrix} \)-(n+1)
\end{align}
where the second inequality follows from Sylvester’s rank inequality (see 10.5 of \cite{matrixcook}). Therefore, 
$\rr\( Y_{i_k}\)=\rr\big( \begin{pmatrix}e_1\ R_{i_k}^\top\end{pmatrix}^\top \big)\leq n+2-\mr(G_1),$ which contradicts to the assumption that $\rr\(Y_{i_k}\)>n+2-\mr(G_1).$ $\renewcommand\qedsymbol{\ding{122}}$ \endproof

From Claim~\ref{rankY}, since $Y_{i_k}\rightarrow Y^*,$ we have that $\rr\(Y^*\)\leq  n+2-\mr(G_1).$
\end{proof}

\medskip
The rank-bound of $n+2-\rr(G_1)$ in Theorem~\ref{MGrank} is different from the rank-bound of $\sqrt{2(n+2)}$ because it is related to the structure of the underlying graph $G$ of matrix $C$. Many existing results in the literature for the minimum rank problem can be applied to estimate the number $n+2-\rr(G_1)$ (see \cite{MG1,MG2,MGn2}). A simple application of Theorem~\ref{MGrank} is on (SQKE) for a binary linear knapsack problem. In this case, $C$ is a diagonal matrix and its underlying graph $G$ is an empty graph. We have that $E(G)\subset E(P_n),$ where $P_n$ is a path with $n$ vertices. Because $\mr(P_n)=n-1$ (see observation 1.2 of \cite{MG1}), the rank-bound becomes $n+2-\rr(P_n)=3,$ which is much better than $\sqrt{2(n+2)}$ when $n$ is large. The SDP relaxation of a linear knapsack problem has been demonstrated to be tighter than its linear programming relaxation (see {lemma 2.8} in \cite{SQK2}). Our results additionally implies that {we} can use (SQKE) to get a better bound than linear programming relaxation with almost the same memory usage. Another interesting case is when the matrix $C$ is a banded matrix such that $C_{ij}=0$ if $|i-j|>k.$ In that case,  Theorem~\ref{MGrank} implies that (SQKE) has an optimal solution of rank $\leq 2+k.$

\subsection{Second order stationarity of a non-regular point of (SQKELR).}\label{Secsta}

In this subsection, we discuss when is a second order stationary point of (SQKELR) also a global optimal solution. Traditional optimality conditions are applicable only to regular points in the feasible region of an optimization problem (see Section 3 of \cite{Perturbation}). Thus, when $R$ is a regular point of (SQKELR), we will call $R$ a second order stationary point if it satisfies the first order and second order KKT conditions (see theorem 12.1 and 12.5 in \cite{numerical}). However, since $\Ka$ may have non-regular points, we need to define the second order stationarity for non-regular points of (SQKELR). From Proposition~\ref{pgct} on the local geometric properties of a non-regular point in $\Ka$, we have the following lemma.

\begin{lem}\label{esc}
Suppose $(a,\tau)$ satisfies Assumption~\ref{ass1} and $r\geq 2.$ Let $R=ve_1^\top\in \N_{n,r},$ where $v\in \{0,1\}^n$ and $d\in \{-1,1\}^n,\sigma_v\in \{-1,1\}$ are defined as in Proposition~\ref{pgct}. If $R$ is a local minimum of (SQKELR), then the optimal value of the following problem is nonnegative.
\begin{equation}\label{escpro}
\min\left\{ \inprod{\( 2\dd\( \(Cv\)\circ d\)-C\)H}{H} :\  
\begin{array}{l}
\<aa^\top-\sigma_v\tau\cdot\dd\(a\circ d\),HH^\top\>=0,\\[5pt]
 \|H\|^2=1,\ H\in \R^{n\times (r-1)}
\end{array} 
\right\}.
\end{equation}
\end{lem}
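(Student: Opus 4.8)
The plan is to translate the statement "$R$ is a local minimum of (SQKELR)" into a second-order condition along the curves constructed in Proposition~\ref{pgct}. Fix $R = ve_1^\top \in \N_{n,r}$ and let $f(R) := -\inprod{C}{RR^\top}$ denote the objective. Given any feasible direction $h = [0,H] \in \widetilde{\T} \subset \T_{\Ka}(R)$ with $\|H\|=1$ satisfying the quadratic constraint $\<aa^\top - \sigma_v\tau\cdot\dd(a\circ d), HH^\top\> = 0$, Proposition~\ref{pgct} supplies an explicit feasible arc $t \mapsto R(t) := R(t,\widehat{W}(t)) \in \Ka$ for $t \in (-\epsilon,\epsilon)$, whose first-order term is $h$ and whose second-order term $w = [-2\dd(HH^\top)\circ d, 0]$ lies in $\T^2_{\Ka}(R,h)$. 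I would plug this arc into $f$ and compute the Taylor expansion $f(R(t)) = f(R) + t\,\langle \nabla f(R), h\rangle + \tfrac{1}{2}t^2\big(\langle \nabla f(R), w\rangle + \langle \h f(R)[h], h\rangle\big) + o(t^2)$. Since $R$ is a local minimum and $f(R(t)) \geq f(R)$ for all small $t$ of both signs, the first-order coefficient must vanish and the second-order coefficient must be nonnegative; the latter is exactly the claim that the optimal value of \eqref{escpro} is $\geq 0$, provided the second-order coefficient simplifies to $\inprod{(2\dd((Cv)\circ d) - C)H}{H}$.

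Concretely, the first step is to recall from \eqref{May_22_6} that along the arc, writing $x(t)$ for the first column (the "$v$-part") and $[tH + t^3\widehat W(t)]$ for the remaining block, one has $x(t) = v - t^2\,\dd(HH^\top)\circ d + O(t^4)$ and the block is $tH + O(t^3)$. The second step is to expand $\inprod{C}{R(t)R(t)^\top}$. Write $R(t)R(t)^\top = x(t)x(t)^\top + (tH + O(t^3))(tH + O(t^3))^\top$. The cross term $x(t)x(t)^\top$ contributes $vv^\top + t^2\big(-v(\dd(HH^\top)\circ d)^\top - (\dd(HH^\top)\circ d)v^\top\big) + O(t^4)$, and the block term contributes $t^2 HH^\top + O(t^4)$. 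Pairing with $C$: the constant term is $\inprod{C}{vv^\top} = v^\top C v$; the $t^1$ term vanishes; and the $t^2$ coefficient of $\inprod{C}{R(t)R(t)^\top}$ is $-2\,v^\top C\,(\dd(HH^\top)\circ d) + \inprod{C}{HH^\top}$. Now $v^\top C (\dd(HH^\top)\circ d) = (Cv)^\top(\dd(HH^\top)\circ d) = \sum_i (Cv)_i d_i (HH^\top)_{ii} = \inprod{\dd((Cv)\circ d)}{HH^\top} = \inprod{\dd((Cv)\circ d)H}{H}$, and $\inprod{C}{HH^\top} = \inprod{CH}{H}$. Hence the $t^2$ coefficient of $f(R(t)) = -\inprod{C}{R(t)R(t)^\top}$ equals $2\inprod{\dd((Cv)\circ d)H}{H} - \inprod{CH}{H} = \inprod{(2\dd((Cv)\circ d) - C)H}{H}$. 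Local minimality forces this to be $\geq 0$ for every admissible $H$ with $\|H\| = 1$, which is precisely the assertion.

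The main obstacle — really the only delicate point — is bookkeeping: I must confirm that the $O(t^3)$ perturbation $\widehat W(t)$ in the second block and the $O(t^4)$ terms in $x(t)$ genuinely do not contribute to the $t^2$ coefficient of $\inprod{C}{R(t)R(t)^\top}$. The block $(tH + t^3\widehat W(t))(tH+t^3\widehat W(t))^\top = t^2 HH^\top + O(t^4)$ is clean since the cross term is $O(t^4)$. For the $v$-part, $x(t) = v + t^2(\cdots) + O(t^4)$ has no $t^1$ or $t^3$ term (the expansion in \eqref{May_22_6} is even up to the displayed order, with the next correction at $t^4$ from $\dd(HW^\top)$ and $\dd^2(HH^\top)$), so $x(t)x(t)^\top = vv^\top + t^2(\cdots) + O(t^4)$ likewise has no odd-order terms below $t^4$; thus the $t^2$ coefficient is unaffected. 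I should also note that because the arc exists for $t$ of both signs and $R$ is a local min, the vanishing of the $t^1$ coefficient is automatic and need not be separately invoked — only the $t^2$ coefficient's sign matters. Finally, the case $H = 0$ is vacuous for \eqref{escpro} since the constraint $\|H\|^2 = 1$ excludes it, so no separate argument is needed there.
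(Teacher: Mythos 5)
Your proposal is correct and follows essentially the same route as the paper: the key step in both is the second-order expansion of $f$ along the curve with first-order part $[0,H]$ and second-order correction $[-2\dd\(HH^\top\)\circ d,\,0]$, yielding the $t^2$ coefficient $\inprod{\(2\dd\(\(Cv\)\circ d\)-C\)H}{H}$, whose nonnegativity is forced by local minimality. The only difference is cosmetic: the paper argues by contradiction using the truncated curve $\widehat{R}(t,H)$ together with $\dist\big(\widehat{R}(t,H),\Ka\big)=o(t^2)$ and a projection onto $\Ka$, whereas you expand directly along the exact feasible arc $R\big(t,\widehat{W}(t)\big)$ constructed in the proof of Proposition~\ref{pgct}, which requires $H\neq 0$ and is guaranteed here by $\norm{H}^2=1$, as you note.
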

\begin{proof}
\pf
Assume by contradiction that there exists a feasible solution $H$ of (\ref{escpro}) such that 
\begin{equation}\label{May_24_1}
\<\( 2\dd\( \(Cv\)\circ d\)-C\)H,H\><0.
\end{equation}
From Proposition~\ref{pgct}, we have that $h:=[0,H]\in \T_{\Ka}\(R\)$ and $w:=\left[ -2\dd\(HH^\top\)\circ d,0\right]\in \T^2_{\Ka}(R,h).$ Define
\begin{equation}\label{May_24_2}
\widehat{R}\(t,H\)=\left[ v-t^2\big(\dd(HH^\top)\circ d\big),tH\right]\in \R^{n\times r}.
\end{equation}
We have that $\dist\big(\widehat{R}\(t,H\),\Ka\big)=o(t^2).$ Define the function $F:\R^{n\times r}\rightarrow \R$ such that $F(R)=-\<C,RR^\top\>$ is the objective function of (SQKELR). We have that
\begin{align}\label{May_24_3}
&F\(\P_{\Ka}\big(\widehat{R}\(t,H\)\big)\)=F\(\widehat{R}\(t,H\)\)+o(t^2)\notag \\[2pt]
&=F(R)+ t^2 \inprod{\( 2\dd\( \(Cv\)\circ d\)-C\)H}{H} +o(t^2),
\end{align}
where the second equality is obtained by expanding $\widehat{R}\(t,H\)\widehat{R}\(t,H\)^\top$ and some algebraic manipulations.
From (\ref{May_24_1}) and (\ref{May_24_3}), we have that for $t>0$ sufficiently small, 
\begin{equation}
F\(\P_{\Ka}\(\widehat{R}\(t,H\)\)\)<F(R).
\end{equation}
 This contradicts to the assumption that $R$ is a local minimum of (SQKELR).
 Hence the lemma is proved.
\end{proof}

\medskip
Lemma~\ref{esc} provides us a necessary condition for a non-regular point of (SQKELR) to be a local minimum. The equality (\ref{May_24_3}) also provides us a way to escape from any non-regular point that does not satisfy the necessary condition. Next, we give the following definition.
\begin{defi}\label{escdirec}
Suppose $(a,\tau)$ satisfies Assumption~\ref{ass1} and $r\geq 2.$  Let $R=ve_1^\top\in \N_{n,r},$ where $v\in \{0,1\}^n$ and $d\in \{-1,1\}^n,\sigma_v\in \{-1,1\}$ are defined as in Proposition~\ref{pgct}. We say that $H\in \R^{n\times (r-1)}$ is an escaping direction of $R$ if $H$ is a feasible solution of (\ref{escpro}) with a negative function value.
\end{defi}
Obviously, we can escape from a non-regular point of (SQKELR) by moving along an escaping direction of it. However, if there is no escaping direction, what can we tell about 
a non-regular point $R=ve_1^\top\in \N_{n,r}$? To answer the question, we first consider the following SDP relaxation of problem (\ref{escpro})
\begin{equation}\label{escSDP}
\min\left\{ \< 2\dd\( \(Cv\)\circ d\)-C,X\>:\  
\begin{array}{l}
\<aa^\top-\sigma_v\tau\cdot\dd\(a\circ d\),X\>=0,\\[5pt]
 \Tr(X)=1,\ X\in \S^n_+ 
\end{array} 
\right\}.
\end{equation}
The dual problem of (\ref{escSDP}) is as follows:
\begin{equation}\label{escSDPd}
\max\left\{ \beta:\  2\dd\( \(Cv\)\circ d\)-C-\alpha\( aa^\top-\sigma_v\tau\cdot\dd\(a\circ d\)\)-\beta I\in \S^n_+,\alpha,\beta\in \R \right\}.
\end{equation}
We show 
in Lemma~\ref{exikkt} that the duality gap between (\ref{escSDP}) and (\ref{escSDPd}) is zero and the optimal values of both (\ref{escSDP}) and (\ref{escSDPd}) are attainable, i.e., there exists a KKT solution of (\ref{escSDP}).
 
 Now, we can define the second order stationarity of a non-regular point as follows.

\begin{defi}\label{secsta}
Suppose $(a,\tau)$ satisfies Assumption~\ref{ass1} and $r\geq 3.$  Let $R=ve_1^\top\in \N_{n,r},$ where $v\in \{0,1\}^n$ and $d\in \{-1,1\}^n,\sigma_v\in \{-1,1\}$ are defined as in Proposition~\ref{pgct}. We say that $R$ is a second order stationary point of (SQKELR) if the optimal function value of the SDP (\ref{escSDP}) is nonnegative.
\end{defi}

\medskip
\begin{rem}\label{escrem}
Verifying the second order stationarity of a non-regular point of (SQKELR) is equivalent to solving the SDP problem (\ref{escSDP}), which can be done in polynomial time. Since there are only two affine constraints in (\ref{escSDP}), there exists an optimal solution $X^*$ such that $\rr\(X^*\)\leq 2.$ 
Observe that if $\inprod{\( 2\dd\( \(Cv\)\circ d\)-C\)}{X^*}<0$ and $X^*=HH^\top,$ where $H\in \R^{n\times (r-1)},$ then from Definition~\ref{escdirec}, we have that $H$ is an escaping direction and hence $R=ve_1^\top$ is not a local minimum for (SQKELR).
\end{rem}

With Definition~\ref{secsta}, we have the following simple corollary.

\begin{coro}\label{cosecsta}
Suppose $a,\tau$ satisfy Assumption~\ref{ass1} and $r\geq 3.$ Let $R=ve_1^\top\in \N_{n,r},$ where $v\in \{0,1\}^n$ and $d\in \{-1,1\}^n,\sigma_v\in \{-1,1\}$ are defined as in Proposition~\ref{pgct}. If $R$ is a local minimum of (SQKELR), then $R$ is a second order stationary point.
\end{coro}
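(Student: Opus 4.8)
The plan is to deduce Corollary~\ref{cosecsta} directly from Lemma~\ref{esc} together with the fact, established in Lemma~\ref{exikkt} (cited just before Definition~\ref{secsta}), that the SDP~(\ref{escSDP}) has zero duality gap with~(\ref{escSDPd}) and attains its optimal value. So suppose $R=ve_1^\top\in \N_{n,r}$ is a local minimum of (SQKELR), with $r\geq 3$. By Lemma~\ref{esc} (which applies since $r\geq 3\geq 2$), the optimal value of the nonconvex problem~(\ref{escpro}) is nonnegative. What remains is to transfer this to the SDP relaxation~(\ref{escSDP}), i.e. to argue that the optimal value of~(\ref{escSDP}) is also nonnegative; by Definition~\ref{secsta} this says precisely that $R$ is a second order stationary point.

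The key step is the comparison between~(\ref{escpro}) and its relaxation~(\ref{escSDP}). Since~(\ref{escSDP}) is a relaxation of~(\ref{escpro}), we a priori only get that $\mathrm{val}(\ref{escSDP})\leq \mathrm{val}(\ref{escpro})$, which is the wrong direction. The trick is to use the low-rank structure: by the discussion in Remark~\ref{escrem}, because~(\ref{escSDP}) has only two affine constraints, it admits an optimal solution $X^*$ with $\rr(X^*)\leq 2$. Since $r\geq 3$, we have $r-1\geq 2$, so we may factorize $X^*=HH^\top$ with $H\in \R^{n\times(r-1)}$. Then $H$ is feasible for~(\ref{escpro}): the equality constraint and the normalization $\|H\|^2=\Tr(HH^\top)=\Tr(X^*)=1$ both hold, and the objective value of~(\ref{escpro}) at $H$ equals $\inprod{(2\dd((Cv)\circ d)-C)}{X^*}$, the optimal value of~(\ref{escSDP}). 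Hence $\mathrm{val}(\ref{escSDP})\geq \mathrm{val}(\ref{escpro})\geq 0$, where the last inequality is Lemma~\ref{esc}. This gives exactly the nonnegativity required by Definition~\ref{secsta}.

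I expect the only real subtlety to be making sure the low-rank reduction is legitimate, i.e. that the optimal value of~(\ref{escSDP}) is indeed attained at a matrix of rank at most $2$; this is where the attainment part of Lemma~\ref{exikkt} (existence of a KKT solution, hence an optimal solution) combined with the standard rank bound for SDPs with few constraints (as already invoked in Remark~\ref{escrem}) is used. The dimension bookkeeping $r\geq 3 \Rightarrow r-1\geq 2 \ge \rr(X^*)$ is what forces the hypothesis $r\geq 3$ rather than merely $r\geq 2$, and it is worth stating explicitly. Everything else is a one-line chain of inequalities, so the corollary follows quickly once these pieces are in place.
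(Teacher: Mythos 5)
Your proposal is correct and follows essentially the same route the paper intends: Lemma~\ref{esc} gives nonnegativity of the value of (\ref{escpro}) at a local minimum, and the rank-$\leq 2$ optimal solution of (\ref{escSDP}) guaranteed by Lemma~\ref{exikkt} and Remark~\ref{escrem} is factorized as $HH^\top$ with $H\in\R^{n\times(r-1)}$ (using $r\geq 3$) to transfer that nonnegativity to the SDP value, which is exactly Definition~\ref{secsta}. The explicit dimension bookkeeping you flag is indeed the only point where $r\geq 3$ (rather than $r\geq 2$) is needed, matching the paper's hypotheses.
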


\subsection{When is a second order stationary point of (SQKELR) global optimal.}\label{Global}
In this subsection, with the definition of second order stationarity, we move on to study when is a second order stationary point of (SQKELR) also a global optimal solution. The following lemma gives a sufficient condition for a second order stationary point of (SQKELR) to be a global optimal solution. 

\begin{lem}\label{rankdefi}
Suppose $(a,\tau)$ satisfies Assumption~\ref{ass1} and $r\geq 3.$ If $R\in \Ka$ is a second order stationary point of (SQKELR) such that $\rr\(R\)\leq r-2$, then 
$Y:=\begin{pmatrix}e_1&R^\top\end{pmatrix}^\top\begin{pmatrix}e_1&R^\top\end{pmatrix}$ is a global optimal solution of (SQKE) and so $R$ is a global optimal solution of (SQKELR).
\end{lem}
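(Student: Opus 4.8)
The plan is to produce a dual certificate of global optimality for the convex SDP (SQKE) at the feasible point $Y=\begin{pmatrix}e_1&R^\top\end{pmatrix}^\top\begin{pmatrix}e_1&R^\top\end{pmatrix}$. Write $Z=\begin{pmatrix}e_1&R^\top\end{pmatrix}^\top\in\R^{(n+1)\times r}$, so $Y=ZZ^\top$ and $x:=Re_1$ is the off-diagonal block of $Y$. Because (SQKE) is a linear SDP, $Y$ is globally optimal as soon as there exist multipliers $(\gamma,\mu,\lambda)\in\R\times\R^n\times\R$ for which the associated dual slack $S$ — the symmetric matrix with corner entry $-\gamma$, off-diagonal block $\frac{1}{2}(\mu+\lambda\tau a)$, and lower-right $n\times n$ block $S_{\mathrm{br}}:=-C-\dd(\mu)-\lambda aa^\top$ — satisfies $S\succeq0$ together with the complementarity $SZ=0$; no constraint qualification is needed for sufficiency. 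One reduction is used throughout: since $\begin{pmatrix}1\\x\end{pmatrix}=Ze_1\in\mathrm{range}(Z)\subseteq\ker S$, every $w=\begin{pmatrix}w_0\\\bar w\end{pmatrix}$ satisfies $w^\top Sw=(\bar w-w_0x)^\top S_{\mathrm{br}}(\bar w-w_0x)$; hence, once $SZ=0$ is arranged, $S\succeq0$ is equivalent to $S_{\mathrm{br}}\succeq0$.

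By Proposition~\ref{regular}, $R$ is either a regular point of $\Ka$ or an element of $\N_{n,r}$, and I would treat the two cases separately. If $R$ is regular, then the rank hypothesis gives $\rr(Z)\le1+\rr(R)\le r-1$, so there is $y\in\R^r\setminus\{0\}$ with $Ry=0$ and $e_1^\top y=0$. Being a regular second-order stationary point, $R$ satisfies the first- and second-order KKT conditions of (SQKELR); rewriting the first-order part in terms of the dual variables of (SQKE) (and using symmetry of $R^\top S_{\mathrm{br}}R$ to pin down $\gamma$) furnishes a slack $S$ of the above form with $SZ=0$. For any $\bar u\in\R^n$ the perturbation $H=\bar uy^\top$ lies in the tangent space $\T_{\Ka}(R)$, because both constraint differentials of (SQKELR) annihilate it — this uses only $Ry=0$ and $e_1^\top y=0$. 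Since the Hessian of the (SQKELR) Lagrangian acts by the quadratic form $H\mapsto2\langle S_{\mathrm{br}},HH^\top\rangle$, the second-order KKT inequality evaluated at $H=\bar uy^\top$ gives $0\le2\|y\|^2\,\bar u^\top S_{\mathrm{br}}\bar u$; as $\bar u$ is arbitrary, $S_{\mathrm{br}}\succeq0$, and we conclude by the reduction above.

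If instead $R=ve_1^\top\in\N_{n,r}$ with $v\in\{0,1\}^n$ (and $r\ge3$ as hypothesized, which is what makes $\rr(R)\le r-2$ meaningful), then LICQ may fail, so I would parametrize directly all slacks compatible with complementarity. Here $\mathrm{range}(Z)$ is the line through $\begin{pmatrix}1\\v\end{pmatrix}$, and imposing $S\begin{pmatrix}1\\v\end{pmatrix}=0$ forces $\mu=-2(Cv)\circ d-\lambda\tau(a\circ d)$ and determines $\gamma$, leaving $\lambda$ free (the case $v=0$, where $a^\top v\neq\tau$, is an identical computation giving $\mu=-\lambda\tau a$). The crux is that the resulting lower-right block is $S_{\mathrm{br}}=2\dd((Cv)\circ d)-C-\lambda\big(aa^\top-\sigma_v\tau\cdot\dd(a\circ d)\big)$, which is precisely the matrix in the dual escaping SDP (\ref{escSDPd}) with $\alpha=\lambda$ and the $\beta I$ term deleted. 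By Lemma~\ref{exikkt} there is an optimal pair $(\alpha^*,\beta^*)$ of (\ref{escSDPd}) for which that matrix is positive semidefinite, and second-order stationarity of $R$ (Definition~\ref{secsta}) is exactly the statement that $\beta^*$, the common optimal value of (\ref{escSDP}) and (\ref{escSDPd}), is $\ge0$. Taking $\lambda=\alpha^*$ then gives $S_{\mathrm{br}}=\big(\text{the PSD matrix of (\ref{escSDPd})}\big)+\beta^* I\succeq\beta^* I\succeq0$. Thus $S\succeq0$ and $(Y,S)$ satisfy the KKT system of (SQKE), so $Y$ is globally optimal for (SQKE); since the objective of (SQKELR) at any $R'\in\Ka$ equals the objective of (SQKE) at the corresponding feasible point, $R$ is then globally optimal for (SQKELR).

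The only genuinely delicate point is the non-regular case, where the usual first/second-order KKT machinery is unavailable: the insight that rescues it is that the one-parameter family of complementary dual slacks has lower-right block coinciding with the dual escaping SDP, so that the second-order stationarity condition — which was \emph{defined} through that SDP in Definition~\ref{secsta} precisely to cope with the failure of LICQ — is exactly what is needed to certify $S_{\mathrm{br}}\succeq0$. The remaining ingredients are routine: verifying $\bar uy^\top\in\T_{\Ka}(R)$ by differentiating the two constraints, checking that the slack built in the regular case is a bona fide dual slack with consistent corner and off-diagonal entries, and the small $v=0$ subcase.
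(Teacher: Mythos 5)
Your proposal is correct and follows essentially the same route as the paper: build the dual slack ${\bf S}$ of (SQKE), use a null vector of $Z=\begin{pmatrix}e_1&R^\top\end{pmatrix}^\top$ (guaranteed by $\rr(R)\le r-2$) in the second-order condition to force the lower-right block $-C-\dd(\mu)-\lambda aa^\top\succeq 0$ in the regular case, and invoke the escaping SDP (\ref{escSDP})/(\ref{escSDPd}) together with Lemma~\ref{exikkt} and Definition~\ref{secsta} in the non-regular case. The only (harmless) cosmetic differences are that you extract the certificate from an attained optimum of the dual (\ref{escSDPd}) rather than from the primal KKT triple of (\ref{escSDP}), and you obtain $\mu$ by solving the complementarity equation $ {\bf S}\binom{1}{v}=0$ instead of positing (\ref{May_24_18}) and verifying (\ref{KKTdua}), which are equivalent computations.
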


\begin{proof}
\pf
\noindent{\bf Case 1.} $R$ is regular.\\
Since $R\in \Ka$ is a regular point, the LICQ holds at $R.$ Thus, it satisfies the following first order and second order KKT conditions for (SQKELR), i.e., there exists $\(\mu,\lambda\)\in \R^n\times \R$ such that 
\begin{align}
&\|a^\top R\|^2-\tau a^\top Re_1=0,\ \dd\big(RR^\top\big)-Re_1=0,\label{KKTpri}
\\[2pt]
&-2CR-2\dd(\mu)R+\mu e_1^\top-2\lambda aa^\top R+\lambda \tau ae_1^\top=0, \label{KKTdua}\\[2pt]
& \inprod{-2C-2\dd\(\mu\)-2\lambda aa^\top}{HH^\top} \geq 0\ \forall H\in \T_{\Ka}\(R\).\label{KKTsec}
\end{align}
Because LICQ is satisfied at $R,$ $\T_{\Ka}\(R\)$ corresponds to the following tangent space of $\Ka$ at $R$
\begin{equation}\label{May_24_7}
T_R\Ka:=\left\{h\in \R^{n\times r}:\ 2\inprod{aa^\top R}{H}-\tau a^\top He_1=0,\ 
2\dd\big(RH^\top\big)-He_1=0\right\}.
\end{equation}
Since $\rr\(R\)\leq r-2,$ there exists $b\in \R^r$ such that $b\neq 0,$ $b_1=0$ and $Rb=0.$ It is easy to see that for any $d\in \R^n,$ $db^\top\in T_R\Ka.$ Substituting $db^\top$ into (\ref{KKTsec}), we get that 
\begin{equation}\label{KKTsecs}
-2C-2\dd\(\mu\)-2\lambda aa^\top\in \S^n_+.
\end{equation}
Now, we will move on to use (\ref{KKTpri}), (\ref{KKTdua}) and (\ref{KKTsecs}) to prove the global optimality of $R.$ Note that we only have to prove that $Y=\(e_1\ R^\top\)^\top \(e_1\ R^\top\)$ is a global optimal solution of (SQKE) through the KKT conditions of (SQKE). First rearrange (\ref{KKTdua}) to get the following equation
\begin{equation}\label{May_24_8}
\( \frac{\mu+\lambda \tau a}{2}, -C-\dd\(\mu\)-\lambda aa^\top \) \begin{pmatrix} e_1^\top \\ R\end{pmatrix}=0.
\end{equation}
After multiplying $R^\top$ to the left-hand side and $e_1$ to the right-hand side of (\ref{KKTdua}), we get the following equation
\begin{equation}\label{May_24_9}
R^\top \( -C-\dd\(\mu\)-\lambda aa^\top\)Re_1+\frac{1}{2}R^\top \mu +\frac{1}{2}\lambda \tau R^\top a=0.
\end{equation}
Dividing equation (\ref{KKTdua}) by 2 and taking its transpose, we get the following equation
\begin{equation}\label{May_24_10}
R^\top\(-C-\dd\(\mu\)-\lambda aa^\top\)=-\frac{e_1\mu^\top+\lambda \tau e_1a^\top}{2}.
\end{equation}
Substituting (\ref{May_24_10}) into (\ref{May_24_9}) and taking its transpose, we get

\begin{equation}\label{May_24_12}
\frac{-\(\mu^\top+\lambda \tau a^\top\)Re_1}{2}e_1^\top+\frac{\mu^\top+\lambda \tau a^\top}{2}R=0.
\end{equation}
Combining (\ref{May_24_8}) and (\ref{May_24_12}), we get
\begin{equation}\label{May_24_12.5}
\begin{pmatrix} \frac{-\(\mu^\top+\lambda \tau a^\top\)Re_1}{2} & \frac{\mu^\top+\lambda \tau a^\top}{2}\\[5pt] \frac{\mu+\lambda \tau a}{2} & -C-\dd\(\mu\)-\lambda aa^\top\end{pmatrix}\begin{pmatrix} e_1^\top \\R\end{pmatrix}=0.
\end{equation}
Note that matrix $Y:=\begin{pmatrix} e_1&R^\top\end{pmatrix}^\top
\begin{pmatrix} e_1&R^\top\end{pmatrix}$ is inside the feasible region of (SQKE). Now consider the problem (SQKE). If we consider 
$(0,0\,;\,0,-C)$ 
as its {profit} matrix, $\mu$ as the dual variable of the
constraint $\dd\(X\)=x,$ $\lambda$ as the dual variable of the constraint $a^\top Xa-\tau a^\top x=0$ and $\frac{1}{2}\(\mu^\top+\lambda \tau a^\top\)Re_1$ as the dual variable for $Y_{11}=1,$ then (\ref{May_24_13}) implies that the complementarity condition of (SQKE) hold for the primal variable $Y$ and the dual variables just mentioned. In order to prove that $Y$ is an optimal solution of (SQKE), we only have to check its dual feasibility i.e., whether the dual slack ${\bf S}$ of (SQKE) is positive semidefinite, where
\begin{equation}\label{dslack}
{\bf S}:=\begin{pmatrix} \frac{-\(\mu^\top+\lambda \tau a^\top\)Re_1}{2} & \frac{\mu^\top+\lambda \tau a^\top}{2}\\[5pt] \frac{\mu+\lambda \tau a}{2} & -C-\dd\(\mu\)-\lambda aa^\top\end{pmatrix}.
\end{equation}
The positive semidefiniteness of ${\bf S}$ comes from (\ref{KKTsecs}) and the following equation
\begin{equation}
{\bf S}=\begin{pmatrix} -\(Re_1\)^\top \\ I\end{pmatrix}\(-C-\dd\(\mu\)-\lambda aa^\top\)\begin{pmatrix} -Re_1 & I\end{pmatrix},
\end{equation}
which can be verified by using (\ref{May_24_10}) several times.\\
\noindent{\bf Case 2.} $R$ is nonregular.\\
In this case, suppose $R=ve_1^\top\in \N_{n,r},$ where $v\in \{0,1\}^n$ and $d\in \{-1,1\}^n,\sigma_v\in \{-1,1\}$ are defined as in Proposition~\ref{pgct}. From Definition~\ref{secsta}, we have that the optimal value of (\ref{escSDP}) is nonnegative. From Lemma~\ref{exikkt}, there exists a KKT solution $\(X,\alpha,\gamma\)$ of (\ref{escSDP}) that satisfies the following KKT condition
\begin{align}
&\<aa^\top-\sigma_v\tau\cdot\dd\(a\circ d\),X\>=0,\ \Tr\(X\)=1,\ X\in \S^n_+\label{May_24_13}\\[2pt]
&2\dd\( \(Cv\)\circ d\)-C-\alpha \( aa^\top-\sigma_v\tau\cdot\dd\(a\circ d\)\)-\gamma I\in \S^n_+\label{May_24_14}\\[2pt]
&\< 2\dd\( \(Cv\)\circ d\)-C-\alpha \( aa^\top-\sigma_v\tau\cdot\dd\(a\circ d\)\)-\gamma I,X\>=0.\label{May_24_15}
\end{align}
Also, since the optimal function value of (\ref{escSDP}) is nonnegative, we have that 
\begin{equation}\label{May_24_16}
\<2\dd\( \(Cv\)\circ d\)-C,X\>\geq 0.
\end{equation}
Now, combining (\ref{May_24_13}), (\ref{May_24_15}) and (\ref{May_24_16}), we have that $\gamma\geq 0.$ Then, from (\ref{May_24_14}), we have that 
\begin{equation}\label{May_24_17}
2\dd\( \(Cv\)\circ d\)-C-\alpha \( aa^\top-\sigma_v\tau \cdot\dd\(a\circ d\)\)\in \S^n_+.
\end{equation}
Comparing (\ref{May_24_17}) and (\ref{KKTsecs}), we may choose the dual variables of (SQKELR) as follows
\begin{equation}\label{May_24_18}
\mu:=-2\(Cv\)\circ d-\alpha \sigma_v\tau \cdot a\circ d,\quad \lambda:=\alpha.
\end{equation}
Then from (\ref{May_24_17}),\ (\ref{KKTsecs}) holds. Now, in order to prove $Y:=\begin{pmatrix}e_1& R^\top\end{pmatrix}^\top
\begin{pmatrix}e_1& R^\top\end{pmatrix}$ 
is the global optimal solution of (SQKE), we only have to verify whether (\ref{KKTdua}) holds for $R,$ $\mu$ and $\lambda.$ Because once (\ref{KKTdua}) holds, the remaining proof will be exactly the same as in Case 1. We have that
\begin{align}
&-2CR-2\dd\(\mu\)R+\mu e_1^\top-2\lambda aa^\top R+\lambda \tau ae_1^\top\notag \\[2pt]
&=\( -2Cv-2\mu\circ v+\mu-2\lambda aa^\top v+\lambda \tau a\)e_1^\top\notag\\[2pt]
&=\( -2Cv-2\(-2\(Cv\)\circ d-\alpha \sigma_v\tau\cdot a\circ d\)\circ v-2\(Cv\)\circ d-\alpha \sigma_v\tau\cdot a\circ d-2\alpha aa^\top v+\alpha \tau a\)e_1^\top\notag \\
&=\Big( -2Cv+4(Cv)\circ v-2\(Cv\)\circ d\Big)e_1^\top
+\tau\alpha\Big( 2\sigma_v a\circ v-\sigma_v a\circ d-\frac{2a^\top v}{\tau}a+a\Big)e_1^\top\notag \\
&=\tau\alpha\Big( \sigma_v a-\frac{2a^\top v}{\tau}a+a\Big)e_1^\top,
\label{May_24_19}
\end{align}
where the last equality comes from that $d=2v-e.$ If 
$v\not=0$, then $\sigma_v=1,$ together with the fact that  $v^\top a=\tau$, we get 
$\sigma_v a-\frac{2a^\top v}{\tau}a+a=0.$ Also, if $v=0$, then $\sigma_v=-1,$  and $\sigma_v a-\frac{2a^\top v}{\tau}a+a=0.$ Thus, we have verified (\ref{KKTdua}).
\end{proof}

\medskip
\begin{rem}\label{recoverdual}
Compared with the results in \cite{Boumal1,Boumal2}, our rank deficient condition is slightly stronger, i.e., there exists at least two zero singular values of a certain second order stationary point. One reason is that the first row of the factorization of $Y$ is fixed to be $e_1^\top$. Also, note that the proof of Lemma~\ref{rankdefi} provides a way to recover the dual variable of (SQKELR), which is also the dual variable for (SQKE). When $R$ is regular, {we} can solve (\ref{KKTdua}) to get a least square solution $(\mu,\lambda).$ The corresponding LICQ property implies that there exists a unique least square solution of (\ref{KKTdua}), which is exactly the dual variable. When $R$ is non-regular, then (\ref{May_24_18}) is an explicit formula for the dual variable of (SQKELR).
\end{rem}

Now, we move on to study when will the rank deficient condition in Lemma~\ref{rankdefi}  hold. The following result is inspired by Lemma 3 in \cite{Boumal0} and Theorem 2 in \cite{Boumal1}. However, our rank-bound is also related to the minimum rank number of the underlying graph of $C$ in (SQKELR).
\begin{lem}\label{generic}
Suppose $(a,\tau)$ satisfies Assumption~\ref{ass1}, $r\geq 3$ and $R$ is a regular first order stationary point of (SQKELR). The following statements hold.
\begin{itemize}
\item [(i)] If $\frac{(r-2)(r-1)}{2}>n+1,$ then for a generic $C\in \S^n,$ $\rr\(R\)\leq r-2.$
\item [(ii)] If $C\in \Q(G)$ for some graph $G$ and $r\geq n+4-\mr(G),$ then $\rr\(R\)\leq r-2.$
\end{itemize}
\end{lem}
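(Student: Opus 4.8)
The plan is to run both parts off the first-order KKT system of (SQKELR), which is available here because $R$ is assumed regular. Rearranging (\ref{KKTdua}) gives $2SR=(\mu+\lambda\tau a)e_1^\top$, where I set $S:=C+\dd(\mu)+\lambda aa^\top$ (this is, up to sign, the lower-right block of the dual slack (\ref{dslack}), and the rearrangement is the one already performed in (\ref{May_24_8})). Reading this identity column by column shows $S(Re_j)=0$ for every $j\in\{2,\dots,r\}$; only the first column $Re_1$ is not forced into $\ker S$. Consequently, toward the stated conclusion, suppose $\rr(R)\ge r-1$: then the last $r-1$ columns of $R$ span a subspace of $\ker S$ of dimension at least $r-2$, so $\rr(S)\le n-r+2$. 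This single corank bound is the engine for both parts; part (ii) then mirrors Claim~\ref{rankY}, while part (i) is a Burer--Monteiro-style genericity count in the spirit of \cite{Boumal0,Boumal1}.

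For part (i) I would first dispose of the trivial case $r\ge n+2$, where $\rr(R)\le n\le r-2$ automatically, and then assume $r\le n+1$. I would bound the set of ``bad'' profit matrices, i.e.\ those $C\in\S^n$ for which some regular first-order stationary point $R$ has $\rr(R)\ge r-1$. By the previous paragraph every such $C$ equals $S-\dd(\mu)-\lambda aa^\top$ for some symmetric $S$ with $\rr(S)\le n-r+2$ and some $(\mu,\lambda)\in\R^n\times\R$, so the bad set is contained in the image of the linear map $(S,\mu,\lambda)\mapsto S-\dd(\mu)-\lambda aa^\top$ restricted to $\mathcal V\times\R^n\times\R$, where $\mathcal V:=\{S\in\S^n:\rr(S)\le n-r+2\}$. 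The determinantal variety $\mathcal V$ has codimension $\binom{r-1}{2}=\frac{(r-1)(r-2)}{2}$ in $\S^n$, so this image is a constructible set of dimension at most $\frac{n(n+1)}{2}-\frac{(r-1)(r-2)}{2}+n+1$, which is strictly below $\dim\S^n=\frac{n(n+1)}{2}$ precisely when $\frac{(r-2)(r-1)}{2}>n+1$. Its Zariski closure is then a proper subvariety of $\S^n$, hence Lebesgue-null, so for a generic $C$ no regular first-order stationary point of rank $\ge r-1$ exists, i.e.\ $\rr(R)\le r-2$.

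For part (ii) I would instead keep the rank-one term on the right, exactly as in the proof of Claim~\ref{rankY}. Rearranging (\ref{KKTdua}) gives $T\begin{pmatrix}e_1^\top\\R\end{pmatrix}=\lambda a\big(2a^\top R-\tau e_1^\top\big)$ with $T:=\big(\mu,\,-2(C+\dd(\mu))\big)\in\R^{n\times(n+1)}$, whose right-hand side has rank at most $1$. Since $C\in\Q(G)$ forces $C+\dd(\mu)\in\Q(G)$, we get $\rr(T)\ge\rr(C+\dd(\mu))\ge\mr(G)$, and Sylvester's rank inequality yields $1\ge\mr(G)+\rr\begin{pmatrix}e_1^\top\\R\end{pmatrix}-(n+1)$. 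Hence $\rr(R)\le\rr\begin{pmatrix}e_1^\top\\R\end{pmatrix}\le n+2-\mr(G)$, which is $\le r-2$ under the hypothesis $r\ge n+4-\mr(G)$; note this conclusion is deterministic, not merely generic.

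The only genuinely new ingredient is the corank bound $\rr(S)\le n-r+2$ and its use in the dimension count of part (i). I expect the main (and still rather minor) obstacle to be the off-by-one bookkeeping: the first column of $R$ is not known to lie in $\ker S$ (because the first row of the factorization is fixed to $e_1^\top$), which costs one unit of corank, and Lemma~\ref{rankdefi} needs rank deficiency two rather than one, which costs another; tracking both carefully is exactly what makes the threshold $\frac{(r-1)(r-2)}{2}>n+1$ rather than a naive $\frac{r(r-1)}{2}>n+1$. Beyond that, one should record the soft algebraic-geometry facts (the image of a variety under a polynomial map is constructible and has dimension at most that of the source, and a proper subvariety of $\S^n$ is Lebesgue-null) so the genericity conclusion is rigorous. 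Part (ii), by contrast, is essentially a verbatim repackaging of the Sylvester argument already used for Theorem~\ref{MGrank}, so I anticipate no difficulty there.
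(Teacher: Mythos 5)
Your proposal is correct and takes essentially the same route as the paper: the key ingredient in both is the corank bound $\rr\(C+\dd(\mu)+\lambda aa^\top\)\le n+1-\rr(R)$ extracted from the first-order KKT system (the paper via Sylvester's inequality applied to (\ref{May_24_8}), you by reading the columns of $2SR=(\mu+\lambda\tau a)e_1^\top$, which is equivalent), followed by the identical measure-zero count for (i), namely that $-C$ lies in $\M_{\leq n+2-r}+\left\{\dd(\mu)+\lambda aa^\top:\mu\in\R^n,\lambda\in\R\right\}$, whose dimension drops below $\dim\S^n$ exactly when $\frac{(r-2)(r-1)}{2}>n+1$. Part (ii) is likewise the same argument as the paper's (your Claim~\ref{rankY}-style Sylvester step with the rank-one right-hand side versus the paper's absorption of $\lambda aa^\top$ at the cost of one rank), so the proof goes through as written.
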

\begin{proof}
\pf
Because $R$ is a regular first order stationary point of (SQKELR), we have that the KKT condition (\ref{KKTdua}) and its variant (\ref{May_24_8}) hold at $R.$ After applying Sylvester's rank inequality (see 10.5 of \cite{matrixcook}) to (\ref{May_24_8}), we have that
\begin{equation}\label{May_24_20}
\rr\( -C-\dd\(\mu\)-\lambda aa^\top\)\leq n+1-\rr\(R\).
\end{equation}
Now, suppose $\rr\(R\)\geq r-1$ and $\frac{(r-2)(r-1)}{2}>n+1,$ then from (\ref{May_24_20}), we have that
\begin{equation}\label{May_24_21}
\rr\( -C-\dd\(\mu\)-\lambda aa^\top\)\leq n+2-r,
\end{equation}
which implies that $-C-\dd\(\mu\)-\lambda aa^\top\in \M_{\leq n+2-r},$ where 
$$\M_{\leq n+2-r}:=\left\{ A\in \S^n:\ \rr\(A\)\leq n+2-r\right\}.$$
Then we have that $-C\in \M_{\leq n+2-r}+\left\{ \dd\(\mu\)+\lambda aa^\top:\ \mu\in \R^n,\ \lambda\in \R\right\}$ whose dimension is bounded by $\frac{n(n+1)}{2}-\frac{(r-2)(r-1)}{2}+n+1<\frac{n(n+1)}{2}=\dim\(\S^n\).$ This means that $C\in \S^n$ is contained in a set of measure zero. Thus, (i) is proved.

If $r\geq n+4-\mr(G)$, then from (\ref{May_24_20}), we have that
\begin{align}
&\rr\(R\)\leq n+1-\rr\( -C-\dd\(\mu\)-\lambda aa^\top\)\notag \\
&\leq n+2-\rr\(-C-\dd\(\mu\)\)\leq n+2-\mr(G)\leq r-2.
\end{align}
Thus, (ii) is verified.
\end{proof}

\medskip
Note that we did not consider non-regular points of $\Ka$ in Lemma~\ref{generic}. This is because from Proposition~\ref{regular}, the rank of any non-regular point $R$ of $\Ka$ already satisfies the condition that $\rr\(R\)\leq 1\leq r-2$ for any $r\geq 3.$ {Now, we are able to state our main result.}
\begin{theo}\label{main}
Suppose $(a,\tau)$ satisfies Assumption~\ref{ass1} and $r\geq 3.$ If $R$ is a second order stationary point of (SQKELR) and at least one of the following two conditions are satisfied
\begin{itemize}
\item [1.] $\frac{(r-2)(r-1)}{2}>n+1$ and $C\in \S^n$ is generic;
\item [2.] $C\in \Q(G)$ for some graph $G$ and $r\geq n+4-\mr(G),$
\end{itemize}
then $Y:=\begin{pmatrix}e_1& R^\top\end{pmatrix}^\top
\begin{pmatrix}e_1& R^\top\end{pmatrix}$ is a global optimal solution of (SQKE) and so $R$ is a global optimal solution of (SQKELR).
\end{theo}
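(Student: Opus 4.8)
The plan is to derive the global optimality of $R$ by combining the two lemmas immediately preceding the theorem. Specifically, Lemma~\ref{generic} was designed exactly to guarantee the rank-deficiency hypothesis $\rr(R)\leq r-2$ needed by Lemma~\ref{rankdefi}, so the proof is essentially a bookkeeping exercise that splits on whether $R$ is regular or not.

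\pf
First I would dispose of the non-regular case. If $R\in\Ka$ is non-regular, then by Proposition~\ref{regular} either $R=0_{n\times r}$ or $R=ve_1^\top$ for some $v\in\{0,1\}^n$ with $a^\top v=\tau$; in either case $\rr(R)\leq 1$. Since $r\geq 3$ we get $\rr(R)\leq 1\leq r-2$, so the rank-deficiency hypothesis of Lemma~\ref{rankdefi} holds automatically. As $R$ is a second order stationary point by assumption, Lemma~\ref{rankdefi} immediately yields that $Y=\begin{pmatrix}e_1&R^\top\end{pmatrix}^\top\begin{pmatrix}e_1&R^\top\end{pmatrix}$ is a global optimal solution of (SQKE) and hence $R$ is globally optimal for (SQKELR).

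Next I would handle the regular case. Here $R$ is a regular point of $\Ka$ that is second order stationary; in particular it is a regular first order stationary point of (SQKELR), so Lemma~\ref{generic} applies. Under condition~1, the inequality $\frac{(r-2)(r-1)}{2}>n+1$ together with genericity of $C$ gives $\rr(R)\leq r-2$ via Lemma~\ref{generic}(i). Under condition~2, the bound $r\geq n+4-\mr(G)$ with $C\in\Q(G)$ gives $\rr(R)\leq r-2$ via Lemma~\ref{generic}(ii). Either way the rank-deficiency hypothesis of Lemma~\ref{rankdefi} is met, and since $R$ is second order stationary, Lemma~\ref{rankdefi} again produces the desired conclusion: $Y$ is globally optimal for (SQKE) and $R$ is globally optimal for (SQKELR). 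Combining the two cases completes the proof. \qed

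I do not anticipate a genuine obstacle here, since the substantive work is already carried out in Lemmas~\ref{rankdefi} and~\ref{generic}; the only point requiring a little care is the observation that non-regular points are never an exception because Proposition~\ref{regular} forces their rank down to at most $1$, so one never needs the genericity or graph hypotheses for them. The theorem is thus a clean corollary obtained by feeding the rank bound from Lemma~\ref{generic} (or the trivial rank bound at non-regular points) into Lemma~\ref{rankdefi}.
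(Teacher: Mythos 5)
Your proposal is correct and follows essentially the same route as the paper, which proves the theorem simply by combining Lemma~\ref{rankdefi} with Lemma~\ref{generic} (and notes, in the remark before the theorem, exactly your point that non-regular points have rank at most $1\leq r-2$ by Proposition~\ref{regular}, so they need no genericity or graph hypothesis). Your write-up just makes this case split explicit.
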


\begin{proof}
\pf
{Theorem~\ref{main} follows directly from the combination of Lemma~\ref{rankdefi} and Lemma~\ref{generic}.}
\end{proof}

\section{Algorithms and implementation details.}\label{Algorithm}
In this section, we consider how to design a feasible 
algorithm to solve (SQKELR) by using the results established in the last few sections. We discuss several techniques involved in our algorithm.
\subsection{Projection and retraction.}\label{PaR}
As mentioned in Corollary~\ref{regular1}, $\Ka$ is a smooth manifold 
for a generic input data $(a,\tau)$,
except for a trivial point. Also, even if $a\in \W_{n,\tau},$ which is defined in (\ref{nreg-W}), from (\ref{nreg}), there are only finitely many non-regular points in $\Ka.$ Therefore, we can use a 
Riemannian optimization method to solve (SQKELR) (see \cite{intromani}). Two important operations in a Riemannian optimization method are projection and retraction. Consider a regular point $R\in \Ka.$ Because LICQ holds at $R,$ $\Ka$ is locally a smooth manifold around $R.$ The projection mapping of $\Ka$ at $R$ is simply the 
orthogonal projection operator of  its tangent space $T_R \Ka.$ However, the retraction mapping (see definition 3.41 and definition 5.41 in \cite{intromani} for the definition of retraction and second order retraction) $\Re_R:\T_R \Ka\rightarrow \Ka$ is not unique. For simple manifolds like the unit sphere $\s^{r-1}$, its retraction can be defined as a normalisation mapping $x\rightarrow \frac{x}{\|x\|_2}$. This retraction has an explicit formula and is also the closest point retraction. For more complicated manifold, it may be difficult for us to find a retraction mapping with a simple closed form. In this case, we have to compute the retraction iteratively (see \cite{PR,NR}). In our algorithm, we use the Newton retraction introduced in \cite{NR}. This retraction is equivalent to applying the Gauss-Newton method to the nonlinear system of the manifold and it is initialized by the point at which we want to retract (see algorithm 1 of \cite{NR}). It has been proved that this is a second order retraction in \cite{NR}.

\subsection{A Riemannian optimization approach that can escape from non-optimal singular points.}\label{RieRound}
In this subsection, we suppose $(a,\tau)$ satisfies Assumption~\ref{ass1} and $r\geq 3.$ When we meet a non-regular point $R=ve_1^\top$ of $\Ka,$ we may first solve (\ref{escSDP}). If the optimal value of (\ref{escSDP}) is negative, then from Remark~\ref{escrem}, we can factorize the optimal solution to get an escaping direction. After that we can move along this direction to escape from this non-optimal non-regular point while reducing the function value of (SQKELR). If the optimal value of (\ref{escSDP}) is nonnegative, i.e., $R$ is a non-regular second order stationary point, then from Lemma~\ref{rankdefi}, we have that 
$Y:=\begin{pmatrix}e_1&R^\top\end{pmatrix}^\top\begin{pmatrix}e_1&R^\top\end{pmatrix}$ is an optimal solution of (SQKE) and so $R$ is a global optimal solution of (SQKELR). 
In practice, it may happen that the iterations of a Riemannian optimization method 
slowly approach a non-regular point of $\Ka$. Thus, to mitigate the slow convergence towards a non-regular point,
we have to apply rounding to every nearly non-regular point to a non-regular point. Due to the constraint $\dd\(RR^\top\)=Re_1,$ we have that $Re_1\in [0,1]^n.$ Define the rounding function $\Ron:\Ka\rightarrow \{0,1\}^n e_1^\top$ such that for any $R\in \Ka,$ $\Ron(R)=ve_1^\top$ and
\begin{equation}\label{round}
v_i=\begin{cases} 1& {\rm if}\ \(Re_1\)_i\geq 0.5 \\ 0 & {\rm otherwise}\end{cases}.
\end{equation}
With the function $\Ron(\cdot),$ we may define the following set for any $\delta>0.$
\begin{equation}\label{Kadelta}
\Ka\(\delta\):=\left\{ R\in \Ka:\ \|R-\Ron(R)\|<\delta\right\}.
\end{equation}
It is easy to see that $\N_{n,r}\in \Ka(\delta)$ for any $\delta>0.$ 
\begin{lem}\label{roundlem}
Suppose $(a,\tau)$ satisfies Assumption~\ref{ass1} and $r\geq 3.$ Then there exists $\delta_0>0$ such that for any $\delta\in (0,\delta_0)$ and any $R\in \Ka\(\delta\),$ $\Ron(R)\in \N_{n,r}$.
\end{lem}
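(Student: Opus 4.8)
The plan is to turn the statement into an elementary discreteness argument about the finite set of attainable weights $S:=\{a^\top u:u\in\{0,1\}^n\}$. Write $\Ron(R)=ve_1^\top$, so that $v\in\{0,1\}^n$ by construction and in particular $a^\top v\in S$ no matter what $R$ is. Since $\N_{n,r}=\{0_{n\times r}\}\sqcup\{ue_1^\top:u\in\{0,1\}^n,\ a^\top u=\tau\}$, membership $ve_1^\top\in\N_{n,r}$ is equivalent to ``$v=0$ or $a^\top v=\tau$''; and because every $a_i>0$, the equality $a^\top v=0$ already forces $v=0$. Hence it suffices to produce $\delta_0>0$, depending only on $(a,\tau)$, such that $R\in\Ka(\delta)$ with $\delta<\delta_0$ implies $a^\top v\in\{0,\tau\}$.

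The first step is to read off two quantitative facts from $R\in\Ka(\delta)$. Put $s:=a^\top Re_1\in\R$ and let $p\in\R^{r-1}$ collect the remaining coordinates $a^\top Re_2,\dots,a^\top Re_r$ of the row vector $a^\top R$. The defining equation $\norm{a^\top R}^2-\tau\,a^\top Re_1=0$ of $\Ka$ becomes $s^2+\norm{p}^2=\tau s$, i.e.\ $s(s-\tau)=-\norm{p}^2$, so $g(s)\le0$ and $0\le s\le\tau$, where $g(x):=x^2-\tau x$. On the other hand, the columns $2,\dots,r$ of $R$ coincide with the corresponding columns of $R-ve_1^\top$, so by Cauchy--Schwarz $\norm{p}\le\norm{a}\,\norm{R-ve_1^\top}<\norm{a}\delta$; likewise, since $Re_1-v$ is the first column of $R-ve_1^\top$, we get $|s-a^\top v|=|a^\top(Re_1-v)|\le\norm{a}\,\norm{R-ve_1^\top}<\norm{a}\delta$. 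Thus $|g(s)|=\norm{p}^2<\norm{a}^2\delta^2$, the point $a^\top v$ lies within $\norm{a}\delta$ of $s$, and both $s$ and $a^\top v$ belong to the fixed compact interval $[0,e^\top a]$ (note $0\le s\le\tau<e^\top a$ and $0\le a^\top v\le e^\top a$).

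Then I would close the argument by discreteness. Since $S$ is finite and $a_1\in S$ with $0<a_1<\tau$ by Assumption~\ref{ass1}, the set $\{t\in S:g(t)\ne0\}$ is nonempty, so $c_0:=\min\{|g(t)|:t\in S,\ g(t)\ne0\}$ is a strictly positive real; let $L$ be a Lipschitz constant for $g$ on $[0,e^\top a]$. Choose $\delta_0>0$ with $\norm{a}^2\delta_0^2+L\norm{a}\delta_0<c_0$. Then for any $\delta\in(0,\delta_0)$ and any $R\in\Ka(\delta)$,
\[
|g(a^\top v)|\;\le\;|g(s)|+L\,|s-a^\top v|\;<\;\norm{a}^2\delta^2+L\norm{a}\delta\;<\;c_0 ,
\]
and since $a^\top v\in S$ this forces $g(a^\top v)=0$, i.e.\ $a^\top v\in\{0,\tau\}$. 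By the reduction of the first paragraph, $\Ron(R)=ve_1^\top\in\N_{n,r}$.

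The delicate point is that $\delta_0$ must be uniform over all $R\in\Ka(\delta)$; this is exactly what finiteness of $S$ buys us, through the gap $c_0>0$ separating the ``admissible'' weights $\{0,\tau\}$ from every other attainable weight, together with the a priori confinement of $s$ and $a^\top v$ to the compact interval $[0,e^\top a]$, which makes the Lipschitz estimate for $g$ uniform as well. (Only the first defining equation of $\Ka$ enters this argument; the constraint $\dd(RR^\top)=Re_1$ is needed elsewhere only to guarantee $Re_1\in[0,1]^n$, which is what makes the rounding rule $\Ron$ meaningful in the first place.)
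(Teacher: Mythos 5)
Your proof is correct and rests on the same core idea as the paper's: the set $\{(a^\top v)^2-\tau a^\top v:\ v\in\{0,1\}^n\}$ is finite, so its nonzero values are bounded away from zero, while the exact constraint $\norm{a^\top R}^2-\tau a^\top Re_1=0$ together with $\norm{R-\Ron(R)}<\delta$ forces $(a^\top v)^2-\tau a^\top v$ to be small. The only difference is presentational: the paper argues by contradiction with a sequence $\delta_k\downarrow 0$, whereas you make the same gap-plus-continuity argument direct and quantitative with an explicit $\delta_0$.
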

\begin{proof}
\pf
Assume on the contrary that there exists $\delta_k\downarrow 0$ such that for any $k\in \mathbb{N}^+,$ there exists $R_k\in \Ka\(\delta_k\)$ such that $\Ron\(R_k\)=v_k e_1^\top\notin \N_{n,r}.$ This implies that 
\begin{equation}\label{May_24_22}
\|a^\top v_k\|^2-\tau a^\top v_k\neq 0,\ \forall k\in \mathbb{N}^+.
\end{equation}
Since there are only finitely many elements in $\{ \|a^\top v\|^2-\tau a^\top v:\ v\in \{0,1\}^n\},$ we have that there exists $\epsilon>0$ such that 
\begin{equation}\label{May_24_23}
\left|\|a^\top v_k\|^2-\tau a^\top v_k\right|\geq \epsilon,\ \forall k\in \mathbb{N}^+.
\end{equation}
However, from $R_k\in \Ka(\delta_k),$ we have that $\|a^\top R_k\|^2-\tau a^\top R_ke_1=0$ and $\lim\limits_{k\rightarrow \infty}\|R_k-v_ke_1^\top\|=0.$ Thus, we have that
\begin{align}\label{May_24_24}
&\lim\sup\limits_{k\rightarrow \infty} \left|\|a^\top v_k\|^2-\tau a^\top v_k\right|=\lim\sup\limits_{k\rightarrow \infty} \left|\|a^\top v_ke_1^\top\|^2-\tau a^\top v_ke_1^\top e_1\right|\notag \\
&=\lim\sup\limits_{k\rightarrow \infty}\left|\|a^\top R_k\|^2-\tau a^\top R_k e_1\right|=0,
\end{align}
which contradicts to (\ref{May_24_23}). 
\end{proof}
With Lemma~\ref{roundlem}, we can design the following algorithm, which can be seen as a modified version of algorithm 1 in \cite{GEP}.

\begin{description}
\item [\bf Algorithm 1:] Define $f\(R\):=\<-C,RR^\top\>.$ Choose $R_0\in \Ka$ and $\delta_0\in (0,1).$ Set $k=0.$

\item[Step 1.] Set $R_k$ as the initial point, $f_k:=f(R_k).$ Use any Riemannian optimization method to solve (SQKELR) such that all the generated iteration point $R^{(j)} \in \Ka$
({$\forall j\in \{0,1,2,\ldots\}$}, with 
$R^{(0)} = R_k$) satisfies $f(R^{(j)})\leq f_k$. 
If the KKT residue
corresponding to $R^{(j)}$ as defined in 
\eqref{KKTres} for the convex SDP (SQKE)
 satisfies the required accuracy tolerance, terminate the Riemannian optimization method and stop. If at some iteration of the Riemannian optimization method, 
 we encounter an iterate 
 denoted by $\widehat{R}_k$ that satisfies $\widehat{R}_k\in \Ka(\delta_k),$ goto Step 2.
\item[Step 2.]
{\bf Case 1.} $\Ron\big(\widehat{R}_k\big)\notin \Ka$: set $R=\widehat{R}_k, $ goto step 2'.\\
{\bf Case 2.} $\Ron\big(\widehat{R}_k\big)\in \Ka$ and it is an optimal solution: Stop.\\
{\bf Case 3.} $\Ron\big(\widehat{R}_k\big)\in \Ka$ and is non-optimal: Use the escaping strategy described above to find another regular point {(regularity comes from the finiteness of non-regular points of $\Ka$)} $R^+\in \Ka$ such that $f(R^+)<f(\Ron(\widehat{R}_k))-\epsilon\big( \Ron\big(\widehat{R}_k\big)\big),$ where 
$\epsilon\big( \Ron\big(\widehat{R}_k\big)\big)>0$ is only related to 
$\Ron\big(\widehat{R}_k\big).$ If $f(R^+)<f(\widehat{R}_k)$ set $R\leftarrow R^+;$ Otherwise, set $R\leftarrow \widehat{R}_k.$ {(In the latter case, $R$ is also a regular point. This is because otherwise $f(R^+)<f(\Ron(\widehat{R}_k))=f(\widehat{R}_k)$ and this case won't happen.)}  Goto Step 2'.
\item[Step 2'.] Set $\delta_{k+1}\leftarrow \delta_k/2$, $R_{k+1}\leftarrow R$,\ $k\leftarrow k+1,$ goto Step 1.
\end{description}

\medskip
\begin{rem}\label{finitecon}
Similar to Theorem 4.1 of \cite{GEP}, {we} can show that for any initialization, Algorithm 1 will terminate after finitely many outer iterations. Here we omit the proof because of their similarity. In other words, after finitely many escaping steps, the iterations of Algorithm 1 will either terminate at some optimal non-regular point or enter a subset $\Ka\setminus \Ka\(\delta_k\)$ of $\Ka$ that is bounded away from the non-regular points of $\Ka.$ In this case, Algorithm 1 is reduced to optimization on a smooth manifold and any results in Riemannian optimization can be applied in our algorithm.
\end{rem}
In Step 1 of Algorithm 1, we choose a Riemannian gradient descent method with Barzilai-Borwein (BB) step and non-monotone linesearch (see \cite{BB,RieBBrank,RieBB,BB1}). We omit the details of the algorithm because it is frequently used in the literature. The parameters we choose for BB step is the same as the Riemannian BB algorithm in \cite{GEP}. 

\subsection{A rounding procedure to the output of (SQKELR).}\label{Rounding}
In practice, {we} can apply rounding procedures to the output of the SDP relaxations of integer programming problems to get an approximate solution. A famous example is the SDP relaxation of a max-cut problem. In \cite{maxcut}, Goemans and Williamson designed a randomized rounding procedure for the output of a max-cut SDP whose expected value is at least 0.878 of the optimal value of the max-cut problem. In this subsection, we describe a simple rounding procedure for the output of the problem (SQKELR). Suppose $R\in \R^{n\times r}$ is the optimal solution of (SQKELR) and $d=(d_1,d_2,\ldots,d_n)^\top$ is the first column of $R.$ We first sort the entries of $d$ in descending order i.e., $d_{i_1}\geq d_{i_2}\geq \ldots \geq d_{i_n}.$ Let $k\in [n]$ to be the maximum integer such that $\sum_{\ell=1}^k a_{i_\ell}\leq \tau.$ Then, define the output $x\in \{0,1\}^n$   of our rounding procedure as follows: $x_{i_\ell}=1$ if $\ell\leq k$ and $x_{i_\ell}=0$ if $\ell>k.$ It is easy to see that $x$ is a feasible solution for (QKP). Also, the complexity of our rounding procedure is $O(n\log n)$ because of the sorting step. The (SQK) model together with the rounding procedure can be considered as a heuristic to find a lower bound of (QKP). We can measure the accuracy of the lower bound by comparing it with the upper bound provided by the SDP relaxation (SQK). {(Here We remove the minus sign of the objective function of (SQK) to get the upper bound of (QKP).)} We will test the accuracy and efficiency of this method later in section~\ref{IntQKP}.

\section{Numerical experiments.}\label{Numerical}
In this section, we conduct numerical experiments to verify the efficiency of our feasible method described in the previous section.  Our dataset satisfies Assumption~\ref{ass1} and the profit matrix $C$ is always nonnegative. Then, from Proposition~\ref{equiv}, (SQK) is equivalent to (SQKE) so we directly test the problem (SQKE). We use the KKT residue (given by 
$\max\{{\rm Rp},{\rm Rd},{\rm pdgap}\}$) for (SQKE) to measure the accuracy of a solution $Y=\(\alpha, x^\top;x, X\)\in \S^{n+1}_+$, where
\begin{align}\label{KKTres}
& {\rm primal\ residue:}\ {\rm Rp}:=\frac{\sqrt{\|\dd\(X\)-x\|^2+(\alpha-1)^2+\(a^\top Xa-\tau a^\top x\)^2}}{2},\notag\\
& {\rm dual\ residue:}\ {\rm Rd}:=\frac{\| \Pi_{\S^{n+1}_-}\({\bf S}\)\|}{1+\|{\bf S}\|},\ {\rm duality\ gap:}\ {\rm pdgap:}=\frac{|\<-C,X\>-y|}{1+|\<-C,X\>|+|y|},
\end{align}
where ${\bf S}$ is the dual slack of (SQKE) defined in (\ref{dslack}), and the dual variable $\(\mu,\lambda\)$ can be constructed from Remark~\ref{recoverdual}; $y$ is the dual variable for the constraint $Y_{11}=1.$ For our method, $y=\frac{1}{2}\(\mu^\top+\lambda \tau a^\top\)Re_1.$ {Note that Algorithm 1 uses factorization so we {do not} have to check whether $Y$ is positive semidefinite.}

The algorithms we compare are SDPLR by Burer and Monteiro and SketchyCGAL by Yurtsever et al. in \cite{CGAL}. {Their source codes can be downloaded from \href{https://sburer.github.io/projects.html}{https://sburer.github.io/projects.html} and \href{https://github.com/alpyurtsever/SketchyCGAL}{https://github.com/alpyurtsever/SketchyCGAL}, respectively.} Note that all  three algorithms make use of the low rank property of the linear SDP to save storage. Therefore, our comparison will demonstrate the advantage of exploiting the geometric property of a problem. We will stop the Riemannian optimization method in Step 1 of Algorithm 1 when the 
KKT residue $\max\{ {\rm Rp},\ {\rm Rd},\ {\rm pdgap}\}<10^{-6}.$ Because it is expensive to compute the {\rm Rd} too frequently, we only check the KKT residue when the normalized norm of Riemannian gradient is small enough, i.e.,
\begin{equation}\label{riegrad}
\frac{\|\P_{T_R\Ka} \(-2CR\)\|}{\max\{1,\|R\|\}}<{\rm tolg}.
\end{equation} 
We first set tolg $:=10^{-6}.$ When (\ref{riegrad}) is satisfied but $\max\{ {\rm Rp},\ {\rm Rd},\ {\rm pdgap}\}\geq 10^{-6},$ we will update the tolerance as ${\rm tolg}\leftarrow {\rm tolg}/10$ and continue running our algorithm. Note that we will always check the KKT residue for a non-regular point because its Riemannian gradient may not be well-defined.

We also set the stopping tolerance for SDPLR to be $10^{-6}.$ For SketchyCGAL, we find that it cannot find an accurate solution, so we set the tolerance to be $10^{-1}.$ The parameter $\delta_0$ in Algorithm 1 is chosen to be $0.1.$ For Algorithm 1, we use random initialization with normal distribution. For SDPLR and SketchyCGAL, we use their defaulted initialization. From our experiment, SDPLR and SketchCGAL usually fail to converge without a pre-scaled input data. Therefore, in order to improve their performance, we scale the capacity $\tau$ and weight vector $a$ as $a\leftarrow a/\tau$ and $\tau\leftarrow 1.$ For all three algorithms, we set the maximum running time to be 3600 seconds. All the experiments are run using Matlab R2021b on a Workstation with a Intel(R) Xeon(R) CPU E5-2680 v3 @ 2.50GHz Processor and 128GB RAM.

\subsection{The SDP relaxation of Binary linear knapsack problem.}\label{BKP}
In this subsection, we consider binary linear knapsack problems. In this case, $C$ is a diagonal matrix. We use the randomly generated data in \cite{hardKP}. {The dataset can be downloaded from \href{http://artemisa.unicauca.edu.co/~johnyortega/instances\_01\_KP/}{http://artemisa.unicauca.edu.co/~johnyortega/instances\_01\_KP/}.} In order to verify the new rank-bound from Theorem~\ref{MGrank}, we choose $r=3$ in Algorithm 1. Note that smaller $r$ does not always lead to faster convergence because it may introduce more spurious local minima. From our numerical tests, we find that the behaviours of SDPLR and SketchyCGAL will always be better if we choose $r$ to be slightly larger than $3.$ Therefore, we choose their rank to be $r=10$ to give them some advantage over Algorithm 1 while the problem size is still $O(n)$.
Note that we have also tested the setting $r=20$, but the performance 
of SDPLR and SketchyCGAL 
are rather similar to that for the setting $r=10$.

\begin{center}
\begin{footnotesize}
\begin{longtable}{|c|c|cccc|l|}
\caption{Comparison of Algorithm1, SDPLR and SketchyCGAL for binary linear knapsack SDP. knapPI\_1, knapPI\_2 and knapPI\_3 are uncorrelated instances, weakly correlated instances and strongly correlated instances, respectively (see \cite{hardKP}). }
\label{LKP}
\\
\hline
problem & algorithm & obj & Rp & Rd & pdgap & time \\ \hline 
 \endhead
 knapPI\_1 & Algorithm1 &  -5.4538020e+04  &  1.64e-10 & 5.38e-09  & 1.46e-08 &  2.89e-01 \\
 n = 1000  & SDPLR &  -5.4537907e+04  &  9.83e-07 & 2.48e-06   & 5.59e-05 &  9.47e+00 \\
 opt = 54503 & SketchyCGAL &   -5.1024775e+04  &  6.27e-02 & 1.45e-08   & 4.77e-02 &  3.19e+02 \\ 
 \hline
 knapPI\_2 & Algorithm1 &  -9.0573608e+03  &  6.46e-16 & 9.93e-10  & 5.12e-08 &  5.88e-01 \\
 n = 1000  & SDPLR &  -9.0545910e+03  &  8.19e-07 & 3.24e-05   & 4.05e-04 &  1.05e+01 \\
 opt = 9052 & SketchyCGAL &   -8.5724562e+03  &  9.68e-02 & 0.00e+00   & 5.15e-02 &  2.08e+03 \\ 
 \hline
 knapPI\_3 & Algorithm1 &  -1.4406317e+04  &  2.23e-13 & 1.13e-07  & 7.86e-09 &  3.98e-01 \\
 n = 1000  & SDPLR &  -1.4406040e+04  &  3.37e-07 & 4.73e-05   & 2.72e-04 &  2.00e+01 \\
 opt = 14390 & SketchyCGAL &   -1.4364806e+04  &  1.41e-01 & 0.00e+00   & 4.96e-02 &  5.98e+02 \\ 
 \hline
 knapPI\_1 & Algorithm1 &  -1.1064590e+05  &  6.64e-12 & 1.57e-08  & 2.07e-09 &  1.02e+00 \\
 n = 2000  & SDPLR &  -1.1064600e+05  &  8.91e-07 & 1.27e-06   & 6.86e-05 &  8.81e+01 \\
 opt = 110625 & SketchyCGAL &   -1.0143376e+05  &  1.06e-01 & 6.86e-11   & 4.71e-02 &  7.29e+02 \\ 
 \hline
 knapPI\_2 & Algorithm1 &  -1.8052806e+04  &  2.13e-09 & 6.93e-07  & 3.79e-07 &  6.24e-01 \\
 n = 2000  & SDPLR &  -1.8051935e+04  &  9.85e-07 & 7.43e-05   & 1.11e-05 &  1.16e+02 \\
 opt = 18051 & SketchyCGAL &   -1.7201318e+04  &  1.97e-01 & 0.00e+00   & 2.02e-01 &  3.60e+03 \\ 
 \hline
 knapPI\_3 & Algorithm1 &  -2.9012870e+04  &  9.18e-11 & 4.15e-10  & 5.72e-09 &  1.44e+00 \\
 n = 2000  & SDPLR &  -2.9010766e+04  &  9.19e-07 & 8.41e-05   & 9.98e-05 &  1.36e+02 \\
 opt = 28919 & SketchyCGAL &   -2.9631203e+04  &  1.77e-01 & 0.00e+00   & 3.78e-02 &  8.14e+02 \\ 
 \hline
 knapPI\_1 & Algorithm1 &  -2.7645834e+05  &  1.71e-12 & 5.99e-07  & 2.39e-08 &  1.24e+01 \\
 n = 5000  & SDPLR &  -2.7645644e+05  &  9.09e-07 & 2.85e-05   & 4.20e-05 &  1.84e+03 \\
 opt = 276457 & SketchyCGAL &   -2.5881374e+05  &  2.28e-01 & 4.30e-08   & 3.35e-02 &  2.33e+03 \\ 
 \hline
 knapPI\_2 & Algorithm1 &  -4.4357542e+04  &  2.45e-12 & 9.60e-08  & 3.05e-08 &  1.24e+01 \\
 n = 5000  & SDPLR &  -4.4354071e+04  &  9.90e-07 & 7.06e-05   & 4.67e-04 &  1.42e+03 \\
 opt = 44356 & SketchyCGAL &   -2.5213507e+04  &  6.57e-01 & 4.97e-08   & 6.07e-01 &  3.60e+03 \\ 
 \hline
 knapPI\_3 & Algorithm1 &  -7.2563369e+04  &  3.51e-09 & 8.99e-10  & 9.00e-09 &  6.67e+00 \\
 n = 5000  & SDPLR &  -7.2518590e+04  &  1.69e-04 & 4.47e-04   & 2.14e-05 &  2.42e+03 \\
 opt = 72505 & SketchyCGAL &   -6.8704702e+04  &  5.81e-01 & 5.52e-08   & 6.02e-02 &  3.60e+03 \\ 
 \hline
 knapPI\_1 & Algorithm1 &  -5.6364672e+05  &  9.87e-10 & 6.70e-07  & 2.30e-07 &  3.13e+01 \\
 n = 10000 & SDPLR &    -  &  - & -   & - &  - \\
 opt = 563647 & SketchyCGAL & -  &  - & -   & - &  - \\
 \hline
 knapPI\_2 & Algorithm1 &  -9.0204226e+04  &  5.55e-11 & 7.45e-07  & 1.23e-07 &  4.53e+01 \\
 n = 10000 & SDPLR &    -  &  - & -   & - &  - \\
 opt = 90204 & SketchyCGAL & -  &  - & -   & - &  - \\
 \hline
 knapPI\_3 & Algorithm1 &  -1.4694938e+05  &  2.50e-09 & 5.12e-08  & 1.61e-09 &  3.95e+01 \\
 n = 10000 & SDPLR &    -  &  - & -   & - &  - \\
 opt = 146919 & SketchyCGAL & -  &  - & -   & - &  - \\
 \hline\end{longtable}
\end{footnotesize}
\end{center}

From the results in Table~\ref{LKP}, we can see that Algorithm 1 can solve all the problems accurately. This verifies the validity of the new rank-bound for binary linear knapsack problems. SDPLR and SketchyCGAL can only solve problems of size up to $5000$ to a moderate
and low accuracy, respectively. For problems of size $10000,$ both of them run out of time and return a solution that is far away from being feasible, so we {do not} show their output in the table. In all these examples, Algorithm 1 is much more efficient than SDPLR and SketchyCGAL. For some instances with $n=5000$, Algorithm 1 is more than 100 times faster than them.

\subsection{The SDP relaxation of Quadratic knapsack problem.}\label{NumerQKP}
In this subsection, we consider the quadratic knapsack problem (QKP) with a quadratic profit matrix $C.$ We randomly generate the profit matrix and weight vector following the procedure proposed by Gallo et al. in \cite{QKP}. The entries of the profit matrix $C_{ij}=C_{ji}$ are zero with probability $(1-p)$ and otherwise integers randomly generated uniformly in the range $[1,100].$ We set the problem density as $p=0.25.$ The coefficients of the weight vector $a_i$ are integers uniformly distributed in the range $[1,50].$ While the procedure in \cite{QKP} also randomly generates the knapsack capacity $\tau$ in the range $[1,e^\top a],$ for convenience, we directly set the knapsack capacity to be $\beta\cdot e^\top a,$ where $\beta$ is chosen to be $0.1,0.5$ and $0.9$. This data-generating procedure has been widely accepted in the literature (see \cite{exactQKP,exactlarge,SQK3,knapcom} to just name a few). For Algorithm 1, we choose $r=\lceil\sqrt{2(n+1)}\rceil+2,$ {which satisfies the rank condition in (i)} of Lemma~\ref{generic}. For SDPLR, we use their default rank. For SketchyCGAL, we choose the rank to be $20$ because we find that the rank of the output is always smaller than the classical theoretical upper bound of $\sqrt{2(n+2)}.$ Actually, when we set $r=20$ for Algorithm 1, it can find an accurate optimal solution of (SQKE) even faster than the case for $r=\lceil\sqrt{2(n+1)}\rceil+2$ (see the running time for Algorithm 1 in Table~\ref{QKP} in the next subsection). We choose $r=\lceil\sqrt{2(n+1)}\rceil+2$ for Algorithm 1 to be consistent to Lemma~\ref{generic}. Note that we have also tested the setting $r=50$ for SketchyCGAL, but
its performance is slightly slower than that for the setting $r=20$.

\begin{center}
\begin{footnotesize}
\begin{longtable}{|c|c|cccc|l|}
\caption{Comparison of Algorithm1, SDPLR and SketchCGAL for quadratic knapsack SDP. }
\label{SQKR}
\\
\hline
problem & algorithm & obj & Rp & Rd& pdgap & time \\ \hline
\endhead
$n$ = 1000 & Algorithm1 & -1.1464123e+06&  2.13e-15 & 4.61e-08  & 1.61e-10 &  1.39e+00 \\ 
 $\beta$ = 1.00e-01 & SDPLR &-1.1464103e+06  &  7.93e-07 &1.03e-07   & 7.44e-06 &  2.37e+01 \\ 
 $p$ = 2.50e-01 & SketchyCGAL &-1.2127791e+06  &  7.41e-02 &1.83e-07   & 1.81e-02 &  7.18e+02 \\ 
 \hline
 $n$ = 1000 & Algorithm1 & -3.6163898e+06&  7.07e-11 & 3.68e-08  & 6.05e-09 &  7.31e-01 \\ 
 $\beta$ = 3.00e-01 & SDPLR &-3.6163884e+06  &  9.96e-07 &1.05e-06   & 2.23e-05 &  5.13e+01 \\ 
 $p$ = 2.50e-01 & SketchyCGAL &-7.3592186e+06  &  3.96e-01 &4.70e-04   & 3.52e-01 &  3.60e+03 \\ 
 \hline
 $n$ = 1000 & Algorithm1 & -6.2329083e+06&  3.05e-15 & 3.47e-08  & 1.01e-09 &  4.96e-01 \\ 
 $\beta$ = 5.00e-01 & SDPLR &-6.2329001e+06  &  9.70e-07 &8.31e-07   & 2.55e-07 &  7.55e+01 \\ 
 $p$ = 2.50e-01 & SketchyCGAL &-9.2273123e+06  &  2.06e-01 &3.34e-02   & 5.88e-01 &  3.60e+03 \\ 
 \hline
 $n$ = 1000 & Algorithm1 & -8.8340915e+06&  2.14e-10 & 2.01e-08  & 1.19e-10 &  6.26e-01 \\ 
 $\beta$ = 7.00e-01 & SDPLR &-8.8341094e+06  &  9.89e-07 &3.92e-07   & 4.59e-06 &  1.17e+02 \\ 
 $p$ = 2.50e-01 & SketchyCGAL &-1.2078637e+07  &  2.11e-01 &4.75e-01   & 9.46e-01 &  3.60e+03 \\ 
 \hline
 $n$ = 2000 & Algorithm1 & -4.8166859e+06&  2.83e-10 & 2.57e-09  & 1.46e-09 &  4.30e+00 \\ 
 $\beta$ = 1.00e-01 & SDPLR &-4.8166801e+06  &  9.53e-07 &6.07e-06   & 2.73e-06 &  1.24e+02 \\ 
 $p$ = 2.50e-01 & SketchyCGAL &- &  -&-  &- & - \\ 
 \hline
 $n$ = 2000 & Algorithm1 & -1.4947423e+07&  4.66e-15 & 4.03e-09  & 1.83e-09 &  2.62e+00 \\ 
 $\beta$ = 3.00e-01 & SDPLR &-1.4947429e+07  &  9.97e-07 &7.55e-07   & 9.12e-06 &  1.14e+03 \\ 
 $p$ = 2.50e-01 & SketchyCGAL &- &  -&-  &- & - \\ 
 \hline
 $n$ = 2000 & Algorithm1 & -2.5092783e+07&  5.67e-10 & 1.03e-08  & 5.42e-11 &  2.53e+00 \\ 
 $\beta$ = 5.00e-01 & SDPLR &-2.5092737e+07  &  8.64e-07 &9.21e-07   & 3.69e-06 &  1.29e+03 \\ 
 $p$ = 2.50e-01 & SketchyCGAL &- &  -&-  &- & - \\ 
 \hline
 $n$ = 2000 & Algorithm1 & -3.5316003e+07&  3.98e-10 & 1.23e-09  & 1.45e-10 &  2.34e+00 \\ 
 $\beta$ = 7.00e-01 & SDPLR &-3.5316024e+07  &  9.76e-07 &1.55e-07   & 1.85e-06 &  1.31e+03 \\ 
 $p$ = 2.50e-01 & SketchyCGAL &-5.0470000e+07  &  3.06e-01 &5.87e-01   & 9.96e-01 &  3.60e+03 \\ 
 \hline
 $n$ = 5000 & Algorithm1 & -3.1439664e+07&  1.02e-11 & 1.07e-09  & 6.85e-09 &  1.28e+01 \\ 
 $\beta$ = 1.00e-01 & SDPLR &-3.1439583e+07  &  9.38e-07 &4.14e-08   & 9.06e-06 &  1.68e+03 \\ 
 $p$ = 2.50e-01 & SketchyCGAL &- &  -&-  &- & - \\ 
 \hline
 $n$ = 5000 & Algorithm1 & -9.5441832e+07&  9.24e-15 & 1.01e-08  & 2.13e-10 &  1.64e+01 \\ 
 $\beta$ = 3.00e-01 & SDPLR &- &  -&-  &- & - \\ 
 $p$ = 2.50e-01 & SketchyCGAL &- &  -&-  &- & - \\ 
 \hline
 $n$ = 5000 & Algorithm1 & -1.5831412e+08&  1.03e-14 & 8.48e-09  & 7.72e-11 &  1.88e+01 \\ 
 $\beta$ = 5.00e-01 & SDPLR &- &  -&-  &- & - \\ 
 $p$ = 2.50e-01 & SketchyCGAL &- &  -&-  &- & - \\ 
 \hline
 $n$ = 5000 & Algorithm1 & -2.2086974e+08&  1.12e-14 & 1.58e-08  & 1.05e-11 &  1.64e+01 \\ 
 $\beta$ = 7.00e-01 & SDPLR &- &  -&-  &- & - \\ 
 $p$ = 2.50e-01 & SketchyCGAL &-3.1589361e+08  &  4.02e-01 &6.44e-01   & 9.99e-01 &  3.60e+03 \\ 
 \hline
 $n$ = 10000 & Algorithm1 & -1.2146659e+08&  1.09e-10 & 1.47e-08  & 6.19e-08 &  5.78e+01 \\ 
 $\beta$ = 1.00e-01 & SDPLR &- &  -&-  &- & - \\ 
 $p$ = 2.50e-01 & SketchyCGAL &- &  -&-  &- & - \\ 
 \hline
 $n$ = 10000 & Algorithm1 & -3.7203424e+08&  1.51e-14 & 1.15e-08  & 6.97e-11 &  7.75e+01 \\ 
 $\beta$ = 3.00e-01 & SDPLR &- &  -&-  &- & - \\ 
 $p$ = 2.50e-01 & SketchyCGAL &- &  -&-  &- & - \\ 
 \hline
 $n$ = 10000 & Algorithm1 & -6.2572380e+08&  1.73e-14 & 1.15e-08  & 6.77e-12 &  6.70e+01 \\ 
 $\beta$ = 5.00e-01 & SDPLR &- &  -&-  &- & - \\ 
 $p$ = 2.50e-01 & SketchyCGAL &- &  -&-  &- & - \\ 
 \hline
 $n$ = 10000 & Algorithm1 & -8.8036592e+08&  1.87e-14 & 2.73e-09  & 6.84e-11 &  7.19e+01 \\ 
 $\beta$ = 7.00e-01 & SDPLR &- &  -&-  &- & - \\ 
 $p$ = 2.50e-01 & SketchyCGAL &- &  -&-  &- & - \\ 
 \hline
 \end{longtable}
\end{footnotesize}
\end{center}

From the results in Table~\ref{SQKR}, we can see that Algorithm 1 is much more efficient than the other two algorithms in all the examples. Algorithms 1 can solve all the instances accurately in a short time. In particular, it can solve each of the SDP problems (SQKE) with $n=10,000$ 
very accurately in about a minute.
SDPLR can only solve the instances of moderate size. As for SketchyCGAL, it reaches the maximum running time in all  the instances except for the first one. Also, the output of Algorithm 1 is always more accurate than  its competitors. Moreover, when $\beta$ is increased, the running time of SDPLR increases dramatically, while the running time of Algorithm 1 becomes even better. This means that Algorithm 1 is more robust than SDPLR. One possible reason is that for SDPLR, the algorithm is based on  ALM. For infeasible methods like ALM and ADMM, their speed is very sensitive to the penalty parameter. Since the best penalty parameter is related to the input data, a different input data may result in significant difference in the running time. However, Algorithm 1 is a feasible method which {does not} need any penalty parameter updating. Thus, Algorithm 1 is more robust than SDPLR and this also shows the advantage of using a feasible method.

\subsection{Quadratic knapsack problem.}\label{IntQKP}
In this section, we solve (QKP) approximately by first solving (SQKELR) and then followed by applying our rounding procedure described in subsection~\ref{Rounding}. In order to test the accuracy of
 a feasible solution $d$ of (QKP), we compute the relative gap between $\<C,X\>$ for some optimal solution $Y=\begin{pmatrix}1&x^\top \\x &X\end{pmatrix}$ of (SQKE) and $d^\top Cd$ i.e.,
\begin{equation}\label{Jun_12_1}
{\rm relgap} = \frac{|\<C,X\>-d^\top Cd|}{1+|d^\top Cd|}.
\end{equation}
We will compare our algorithm with a method called DP Heuristic proposed by Fomeni and Letchford in \cite{QKPH}, which is a modified dynamic programming algorithm. {The source codes can be downloaded from \href{https://sites.google.com/view/franklindjeumoufomeni/research/quadratic-knapsack-problem}{https://sites.google.com/view/franklindjeumoufomeni/research/quadratic-knapsack-problem}.} It has been shown in the numerical experiments of \cite{QKPH} that their algorithm can find a feasible solution for (QKP) that is quite close to the exact solution. We generate the dataset in the same way as mentioned in the previous subsection. However, since their codes require that the knapsack capacity $\tau$ to be an integer, we choose $\tau$ to be $\lceil\beta \cdot e^\top a\rceil,$ where $\beta \in \{0.1,0.5\}.$ We also choose the density parameter $p\in\{0.1,0.5\}.$ Although the theoretical rank bound for (SQKELR) is $\lceil \sqrt{2(n+1)}\rceil+2$, we find that the rank of an optimal solution of (SQKELR) is usually less than 20. Therefore, we choose the rank parameter $r:=\min\left\{20,\lceil \sqrt{2(n+1)}\rceil+2\right\}$ to further accelerate Algorithm 1. We set the maximum running time to be 3600 seconds.\\

\begin{center}
\begin{footnotesize}
\begin{longtable}{|c|c|cc|c|l|}
\caption{Comparison between Algorithm1 with rounding procedure and DP Heuristic.}
\label{QKP}
\\
\hline
$n\ |$ $\beta\ |$ $p\ |$ $f_{\rm SDP}$ & algorithm & obj & relgap & time \\ \hline
\endhead
$100| \ 0.1| \ 0.1$ & Algorithm1 & 5524 & 1.49e-01 & 1.25e-01 \\
 6.3497166e+03 & DP Heuristic & 5736 & 1.07e-01 & 1.88e-01 \\
 \hline
 $100| \ 0.1| \ 0.5$ & Algorithm1 & 29283 & 1.45e-02 & 5.56e-02 \\
 2.9708357e+04 & DP Heuristic & 29216 & 1.69e-02 & 1.91e-01 \\
 \hline
 $100| \ 0.5| \ 0.1$ & Algorithm1 & 28535 & 1.48e-02 & 3.67e-02 \\
 2.8957497e+04 & DP Heuristic & 28535 & 1.48e-02 & 3.94e-01 \\
 \hline
 $100| \ 0.5| \ 0.5$ & Algorithm1 & 125252 & 1.06e-03 & 3.98e-02 \\
 1.2538509e+05 & DP Heuristic & 125252 & 1.06e-03 & 3.35e-01 \\
 \hline
 $500| \ 0.1| \ 0.1$ & Algorithm1 & 120312 & 2.41e-02 & 5.09e-01 \\
 1.2321499e+05 & DP Heuristic & 120554 & 2.21e-02 & 7.45e+01 \\
 \hline
 $500| \ 0.1| \ 0.5$ & Algorithm1 & 598541 & 2.38e-03 & 5.67e-01 \\
 5.9996349e+05 & DP Heuristic & 599355 & 1.02e-03 & 7.88e+01 \\
 \hline
 $500| \ 0.5| \ 0.1$ & Algorithm1 & 648454 & 4.31e-03 & 3.16e-01 \\
 6.5124711e+05 & DP Heuristic & 648915 & 3.59e-03 & 9.35e+01 \\
 \hline
 $500| \ 0.5| \ 0.5$ & Algorithm1 & 3165783 & 1.91e-04 & 4.47e-01 \\
 3.1663868e+06 & DP Heuristic & 3166162 & 7.10e-05 & 9.08e+01 \\
 \hline
 $1000| \ 0.1| \ 0.1$ & Algorithm1 & 500226 & 9.65e-03 & 7.21e-01 \\
 5.0505314e+05 & DP Heuristic & 501407 & 7.27e-03 & 1.17e+03 \\
 \hline
 $1000| \ 0.1| \ 0.5$ & Algorithm1 & 2414103 & 4.22e-03 & 5.01e-01 \\
 2.4243016e+06 & DP Heuristic & 2420049 & 1.76e-03 & 1.19e+03 \\
 \hline
 $1000| \ 0.5| \ 0.1$ & Algorithm1 & 2574765 & 1.64e-03 & 4.91e-01 \\
 2.5789825e+06 & DP Heuristic & 2575821 & 1.23e-03 & 1.34e+03 \\
 \hline
 $1000| \ 0.5| \ 0.5$ & Algorithm1 & 12428992 & 2.10e-03 & 7.54e-01 \\
 1.2455036e+07 & DP Heuristic & 12453855 & 9.48e-05 & 1.34e+03 \\
 \hline
 $5000| \ 0.1| \ 0.1$ & Algorithm1 & 12644311 & 1.07e-03 & 6.92e+00 \\
 1.2657897e+07 & DP Heuristic & - & - & - \\
 \hline
 $5000| \ 0.1| \ 0.5$ & Algorithm1 & 60961427 & 1.77e-03 & 5.51e+00 \\
 6.1069318e+07 & DP Heuristic & - & - & - \\
 \hline
 $5000| \ 0.5| \ 0.1$ & Algorithm1 & 62978380 & 5.24e-04 & 6.12e+00 \\
 6.3011370e+07 & DP Heuristic & - & - & - \\
 \hline
 $5000| \ 0.5| \ 0.5$ & Algorithm1 & 315829856 & 1.99e-05 & 6.95e+00 \\
 3.1583613e+08 & DP Heuristic & - & - & - \\
 \hline
 $10000| \ 0.1| \ 0.1$ & Algorithm1 & 48436428 & 8.39e-04 & 2.44e+01 \\
 4.8477060e+07 & DP Heuristic & - & - & - \\
 \hline
 $10000| \ 0.1| \ 0.5$ & Algorithm1 & 243618278 & 8.80e-04 & 2.13e+01 \\
 2.4383258e+08 & DP Heuristic & - & - & - \\
 \hline
 $10000| \ 0.5| \ 0.1$ & Algorithm1 & 251900539 & 1.04e-04 & 2.68e+01 \\
 2.5192683e+08 & DP Heuristic & - & - & - \\
 \hline
 $10000| \ 0.5| \ 0.5$ & Algorithm1 & 1251839038 & 1.82e-04 & 2.68e+01 \\
 1.2520674e+09 & DP Heuristic & - & - & - \\
 \hline
 \end{longtable}
\end{footnotesize}
\end{center}

From the results in Table~\ref{QKP}, we can see that the output of DP Heuristic is slightly more accurate than the SDP model with rounding procedure. This is reasonable because DP Heuristic is a dynamic programming algorithm with a lot of delicated enhancements. However, the overall accuracy of the SDP model and DP Heuristic is at the same level. Moreover, Algorithm 1 is much more efficient than DP Heuristic. It has been mentioned in \cite{QKPH} that the computational complexity of DP Heuristic is $O(n^2\tau).$ Since we choose $\tau=\lceil \beta\cdot e^\top a\rceil=\Theta(n),$ the complexity becomes $O(n^3),$ which becomes extremely slow when $n$ is large. This can also be seen from Table~\ref{QKP}. When $n=100,$ both DP Heuristic and Algorithm 1 can terminate within a second. However, when $n=1000,$ DP Heuristic needs more than 1000 seconds to return a solution while the running time of our algorithm is still less than a second. In this case, our algorithm is more than 1000 times faster than DP Heuristic. When $n=5000$ or $10000,$ DP Heuristic reaches the maximum running time but our algorithm can terminate within 30 seconds. 

\subsection{Testing Algorithm 1 for problems with non-regular points.}\label{SigQKP}
Although Algorithm 1 is able to verify the global optimality of a non-regular point and escape from it if it is not optimal, we {have not} met any non-regular points in the numerical experiments of the previous subsections. One reason is that for a generic input data $\(a,\tau\)\in \R^n\times \R$ that satisfies Assumption~\ref{ass1}, $\Ka$ is regular everywhere except for a trivial point (see Corollary~\ref{regular1}). Also, a trivial point implies that we {do not} select any item in the knapsack problem, which is unlikely to happen in practice. In order to test the ability of Algorithm 1 to deal with non-regular points, we generate a special dataset $\(C',a',\tau'\)\in \S^n\times \R^n\times \R$ such that the optimal solution of (SQKE) contains a non-regular point, i.e., an integer solution. For an even number $n\in \mathbb{N}^+,$ we first generate a random data $\(C,a,\tau\)\in \S^n\times \R^n\times \R$ using the same method as in subsection~\ref{NumerQKP}. We then modify the data $\(C,a,\tau\)$ to get a new data $\(C',a',\tau'\)\in \S^n\times \R^n\times \R$ as follows:
For any $i,j\in [n]$,
\begin{equation}\label{Jun_18_1}
C'_{ij}:=\begin{cases}C_{ij}&{i\ {\rm and}\ j\ {\rm are\ even}} \\ 0 & {\rm otherwise}\end{cases},\quad a'_i:=a_{2\lceil i/2\rceil},\quad \tau'=\frac{1}{2}e^\top a'.
\end{equation}

Consider problem (SQKE) and (SQKELR) with input data $\(C',a',\tau'\).$ Define two vectors $v_1,v_2\in \{0,1\}^n$ such that the index of nonzero entries of $v_1$ and $v_2$ are odd numbers and even numbers respectively. From (\ref{Jun_18_1}), we have that $a'^\top v_1=a'^\top v_2=\tau'.$ Thus, both $v_1e_1^\top$ and $v_2e_1^\top$ are inside $\N_{n,r}$ and so they are non-regular points of $\Ka.$ Also, it is easy to verify that $R=v_2e_1^\top$ is a global optimal solution of (SQKELR) and $Y=\(1\ v_2^\top\)^\top \(1\ v_2^\top\)$ is a global optimal solution of (SQKE). Actually, if for any $i=2k\in [n],$ there exists $j=2\ell\in [n]$ such that $C'_{ij}>0,$ then $R=v_2e_1^\top$ and $Y=\(1\ v_2^\top\)^\top \(1\ v_2^\top\)$ are the only global optimal solution of (SQKELR) and (SQKE) respectively. In this numerical experiment, we choose $n\in\{1000,2000,5000,7000,10000\},$ and $p=0.25.$ We choose $r=3$ because all the instances have a rank-one optimal solution. We generate the initial point $R_0$ by adding a small perturbation to the non-regular non-optimal point $v_1e_1^\top.$ The {\sc Matlab} code for the initialization is: \texttt{R0 = v1*[1,0,0]+rand(n,3)/(1000*n);}  In this case, the escaping procedure of Algorithm 1 is likely to be triggered at the beginning of the algorithm. To demonstrate the usefulness of the rounding procedure in Algorithm 1, we also tested Algorithm 1 without the rounding procedure, i.e., the Riemannian gradient descent method with BB step and non-monotone line search mentioned in subsection~\ref{RieRound}. We call this method RieBB.

\begin{center}
\begin{footnotesize}
\begin{longtable}{|c|c|cccc|l|}
\caption{Comparison between Algorithm 1 and the Riemannian optimization method, RieBB without the rounding procedure, for quadratic knapsack SDPs with non-regular optimal solutions.}
\label{SQKSIG}
\\
\hline
problem  & Algorithm & obj & Rp & Rd& pdgap & time \\ \hline
\endhead

$n$ = 1000 & Algorithm1 & -3.1407370e+06&  0.00e+00 & 0.00e+00  & 1.31e-16 &  3.42e-01 \\ 
 $p$ = 0.25 & RieBB & -3.1407370e+06&  2.99e-10 & 0.00e+00  & 5.19e-12 &  4.78e+00 \\ 
 \hline
 $n$ = 2000 & Algorithm1 & -1.2725874e+07&  0.00e+00 & 0.00e+00  & 1.30e-16 &  7.57e-01 \\ 
 $p$ = 0.25 & RieBB & -1.2725874e+07&  9.64e-11 & 0.00e+00  & 3.70e-14 &  1.03e+00 \\ 
 \hline
 $n$ = 5000 & Algorithm1 & -7.8879089e+07&  2.22e-16 & 0.00e+00  & 2.09e-16 &  4.75e+00 \\ 
 $p$ = 0.25 & RieBB & -7.8879089e+07&  3.03e-10 & 3.08e-15  & 1.90e-12 &  6.43e+00 \\ 
 \hline
 $n$ = 7000 & Algorithm1 & -1.5456007e+08&  1.78e-15 & 3.86e-18  & 1.57e-15 &  9.95e+00 \\ 
 $p$ = 0.25 & RieBB & -1.5456007e+08&  1.35e-12 & 0.00e+00  & 3.86e-16 &  1.12e+02 \\ 
 \hline
 $n$ = 10000 & Algorithm1 & -3.1552221e+08&  8.88e-15 & 0.00e+00  & 1.26e-15 &  2.00e+01 \\ 
 $p$ = 0.25 & RieBB & -3.1552221e+08&  7.03e-12 & 0.00e+00  & 1.13e-15 &  4.51e+01 \\ 
 \hline
 \end{longtable}
\end{footnotesize}
\end{center}

From the results in Table~\ref{SQKSIG}, we can see that Algorithm 1 can solve all of the instances efficiently. Also, note that the KKT residues of the solutions are close to the rounding error. This is because our test problems has an integer solution $v_2e_1^\top,$ which can be obtained exactly by the rounding step in Algorithm 1. The performance of RieBB is unstable. For the instance with $n=7000$, it is more than 10 times slower than Algorithm 1. This is because the Riemannian optimization method, RieBB, may suffer from degeneracy issue when the iterations approach a non-regular point, and that may slow down the algorithm. However, this will not happen in Algorithm 1 because the rounding procedure allows the algorithm to detect a non-regular point in advance and escape from it if it is non-optimal.

\subsection{Testing Algorithm 1 for large sparse problems.}\label{Largesparse}
It has been verified in the previous subsections that Algorithm 1 is more efficient than other SDP solvers that can also utilise the low rank property of a linear SDP problem. In this subsection, we move on to test some large instances of (SQKE) where $n\geq 10000$ and $C$ is a sparse random matrix. The matrix $C$ is generated in the same way as in subsection~\ref{NumerQKP} with density $p=\log(n)/n.$ We choose $\beta = 0.5$ and $n\in\{10^4,2\cdot10^4,5\cdot 10^4,10^5,2\cdot 10^5,5\cdot 10^5,10^6\}.$ We also set $r=20$ as in subsection~\ref{IntQKP}. Note that when $n$ is very large, computing the dual residue Rd (see (\ref{KKTres})) will be extremely slow because it requires the full eigenvalue decomposition of a large dense matrix ${\bf S},$ which is the dual slack defined in (\ref{dslack}). However, due to the special structure of ${\bf S},$ i.e., it can be written as the sum of a sparse matrix and a low rank matrix, we can use the solver lobpcg developed by Knyazev in \cite{lobpcg} to compute the smallest eigenvalue of ${\bf S}$ efficiently. {The source codes can be downloaded from \href{https://github.com/lobpcg/blopex}{https://github.com/lobpcg/blopex}.} We redefine the dual residue ${\rm Rd}:=\frac{\max\{0,-\lambda_{\min}\({\bf S}\)\}}{1+\|{\bf S}\|_F}$, 
{so that only the smallest eigenvalue of ${\bf S}$ needs to be computed.}
It is easy to see that ${\rm Rd}=0$ if and only if ${\bf S}\in \S^{n+1}_+.$

\begin{center}
\begin{footnotesize}
\begin{longtable}{|c|ccccc|l|}
\caption{Testing Algorithm 1 for quadratic knapsack SDPs with large sparse profit matrix $C.$}
\label{SQKLARGE}
\\
\hline
problem  & obj & relgap& Rp & Rd& pdgap & time \\ \hline
\endhead
$n$ = 10000  & -2.7522454e+06&  1.86e-02&  6.76e-14 & 5.97e-08  & 7.11e-08 &  7.29e+00 \\ 
 \hline
 $n$ = 20000  & -5.8646881e+06&  1.88e-02&  4.32e-11 & 9.77e-08  & 5.59e-09 &  1.25e+01 \\ 
 \hline
 $n$ = 50000  & -1.5887212e+07&  1.74e-02&  2.37e-12 & 5.98e-08  & 3.97e-07 &  2.68e+01 \\ 
 \hline
 $n$ = 100000  & -3.3509978e+07&  1.81e-02&  3.72e-12 & 3.52e-08  & 2.81e-08 &  4.16e+01 \\ 
 \hline
 $n$ = 200000  & -7.1116522e+07&  2.41e-02&  2.44e-11 & 1.54e-09  & 4.50e-08 &  7.98e+01 \\ 
 \hline
 $n$ = 500000  & -1.8974901e+08&  2.64e-02&  7.80e-09 & 4.90e-09  & 1.57e-07 &  3.39e+02 \\ 
 \hline
 $n$ = 1000000  & -3.9832359e+08&  2.79e-02&  2.47e-10 & 1.51e-09  & 2.40e-08 &  1.26e+03 \\ 
 \hline
 \end{longtable}
\end{footnotesize}
\end{center}
From the results in Table~\ref{SQKLARGE}, we can see that Algorithm 1 can solve all the problems successfully. For the instance of size $n=10^6,$ Algorithm 1 can compute a solution with high accuracy in about 20 minutes. Also, it should be noted that for a sparse profit matrix $C,$ the relative gap between the SDP bound and the objective value of the rounding solution is close to $2\%,$ which is considered to be very good for an NP-hard integer programming problem.

\section{Conclusion.}\label{Conclusion}
In this paper, we study one of the SDP relaxations of the quadratic knapsack problem. We consider the low rank formulation of this SDP problem to reduce the problem's dimensionality. In order to solve the low rank problem efficiently, we explore the geometric properties of its feasible region $\Ka$, which is an algebraic variety. We prove that this algebraic variety is smooth everywhere for a generic input data except for a trivial point. We also study the local geometric properties of $\Ka$ at non-regular points on this algebraic variety.  In order to find the global optimal solution of this non-convex problem, we derive a new-rank bound for these two problems to be equivalent. We also prove that there is no spurious local minima under some rank condition without using any regularity condition. All these good properties allow us to design a feasible method based on Riemannian optimization, which can also handle non-regular points. We conduct numerical experiments to verify the efficiency and robustness of our feasible method. We also apply a rounding procedure to the optimal solution of the SDP relaxation to get a feasible solution of the quadratic knapsack problem. Numerical experiments show that our feasible method with rounding strategy is much more efficient than another heuristic based on dynamic programming. Traditional feasible methods focus on simple smooth manifolds like oblique manifold and Stiefel manifold and thus their application range is quite limited. However, this work demonstrates the possibility of extending a Riemannian optimization framework to algebraic varieties. {It would be interesting to generalize this idea for solving more general SDP problems with more complicated constraints. We leave this for future work.}


\newpage
\appendix
\section{Useful auxiliary results}
\subsection{A corollary of the implicit function theorem}
\begin{lem}\label{implicit}
Let $F:\R^{n}\times \R^{m}\rightarrow \R^{k}$ be a continuously differentiable mapping in a neighbourhood of $\( x_0,y_0\)\in \R^{n}\times \R^{m}.$ Suppose $F(x_0,y_0)=0$ and $\D_y F(x_0,y_0):\R^m\rightarrow \R^k$ is a surjective linear mapping. Then there exists $\delta>0$ and a continuously differentiable mapping $y:B_\delta(x_0)\rightarrow \R^k$ such that $y(x_0)=y_0$ and $F(x,y(x))=0.$ Here, $B_\delta(x_0):=\left\{ x\in \R^n:\ \dist(x,x_0)<\delta\right\}.$
\end{lem}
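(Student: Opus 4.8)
The plan is to reduce Lemma~\ref{implicit} to the classical implicit function theorem (in which the partial derivative with respect to the second variable is required to be an isomorphism) by working on a suitable complement of the kernel of $\D_y F(x_0,y_0)$. First I would note that, since $\D_y F(x_0,y_0):\R^m\to\R^k$ is surjective, its kernel $N$ has dimension $m-k$; I fix any linear complement $L\subset\R^m$ with $\R^m=N\oplus L$, so that $\dim L=k$ and the restriction $\D_y F(x_0,y_0)\big|_{L}:L\to\R^k$ is a linear isomorphism. Choosing a basis of $L$ to identify it with $\R^k$, I define $G:\R^n\times\R^k\to\R^k$ near $(x_0,0)$ by $G(x,z):=F(x,\,y_0+z)$ with $z$ ranging over $L$. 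Then $G$ is continuously differentiable near $(x_0,0)$, we have $G(x_0,0)=F(x_0,y_0)=0$, and $\D_z G(x_0,0)=\D_y F(x_0,y_0)\big|_{L}$ is invertible.

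Next I would invoke the standard implicit function theorem for $G$: it produces $\delta>0$ and a continuously differentiable map $z:B_\delta(x_0)\to L$ with $z(x_0)=0$ and $G(x,z(x))=0$ for every $x\in B_\delta(x_0)$. Defining $y(x):=y_0+z(x)$ then gives a continuously differentiable map with $y(x_0)=y_0$ and $F(x,y(x))=G(x,z(x))=0$, which is precisely the asserted conclusion. (Here $y$ takes values in $\R^m$ rather than $\R^k$; the ``$\R^k$'' appearing as the codomain in the statement is a typo, and the $C^1$ regularity of $y$ is inherited directly from that of $z$, which is part of the output of the classical theorem.)

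There is no genuine obstacle in this argument: every step is either an elementary linear-algebra decomposition or a direct citation of the classical implicit function theorem. The only points worth a moment's attention are that the complement $L$ need not — and in general cannot — be chosen canonically (any algebraic complement of $N$ suffices, e.g. the orthogonal complement or the range of any right inverse of $\D_y F(x_0,y_0)$), and that one must restrict $F$ to the affine slice $y_0+L$ before differentiating, so that the relevant partial Jacobian is genuinely square and invertible as required by the classical statement.
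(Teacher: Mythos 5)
Your proof is correct, and it takes a mildly but genuinely different route from the paper's. You make the system square by removing unknowns: splitting $\R^m=N\oplus L$ with $N=\ker \D_y F(x_0,y_0)$, restricting $F$ to the affine slice $y_0+L$, and applying the classical implicit function theorem to $G(x,z)=F(x,y_0+z)$, whose partial derivative $\D_y F(x_0,y_0)\big|_L:L\to\R^k$ is an isomorphism. The paper instead makes the system square by adding equations: it chooses a linear map $\A:\R^m\to\R^{m-k}$ so that $\triangle y\mapsto \big(\D_y F(x_0,y_0)[\triangle y],\A(\triangle y)\big)$ is bijective on $\R^m$, and applies the classical theorem to $\widehat{F}(x,y):=\big(F(x,y),\A(y)-\A(y_0)\big)$. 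The two devices are dual and in fact yield the same solution branch: the added constraint $\A(y)=\A(y_0)$ confines $y$ to the affine slice $y_0+\ker\A$, and $\ker\A$ is exactly a complement of the kernel, playing the role of your $L$. What each buys: your version handles $m=k$ and $m>k$ uniformly (when $m=k$ one simply has $L=\R^m$) and works with a $k$-dimensional unknown, at the cost of fixing a basis identification of $L$ with $\R^k$; the paper's version avoids that identification but needs the case split $m=k$ versus $m>k$ and enlarges the target space to $\R^m$. Your observation that the codomain of $y$ in the statement should read $\R^m$ rather than $\R^k$ is also correct.
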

\begin{proof}
\pf
Since $\D_y F(x_0,y_0)$ is surjective, we have that $m\geq k.$ If $m=k,$ then Lemma~\ref{implicit} directly follows from the implicit function theorem and the mapping $y$ is unique. If $m>k$, we may find a linear mapping $\A:\R^m\rightarrow \R^{m-k}$ such that the linear operator $\triangle y\rightarrow \(\D_y F(x_0,y_0)[\triangle y],\A(\triangle y)\)$ is a bijective mapping from $\R^m$ to $\R^m.$ Thus, we can define a new mapping $\widehat{F}:\R^{n}\times \R^{m}\rightarrow \R^m$ such that $\widehat{F}(x,y):=\(F(x,y),\A(y)-\A(y_0)\)$ and apply the implicit function theorem to $\widehat{F}.$  
\end{proof}

\subsection{(\ref{escSDP}) and (\ref{escSDPd}) are strictly feasible}
\begin{lem}\label{exikkt} 
Suppose $(a,\tau)$ satisfies Assumption~\ref{ass1}. Then both (\ref{escSDP}) and (\ref{escSDPd}) have a strictly feasible solution.
 \end{lem}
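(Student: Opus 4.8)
The plan is to establish Slater's condition for each of the two conic programs; recall that (\ref{escSDP}) and (\ref{escSDPd}) are built from a non-regular point $R=ve_1^\top\in\N_{n,r}$ with $v\in\{0,1\}^n$, $d=2v-e$, and $\sigma_v$ as in Proposition~\ref{pgct}. Strict feasibility of (\ref{escSDPd}) is immediate: take $\alpha=0$ and any $\beta<-\norm{2\dd((Cv)\circ d)-C}$, so that $2\dd((Cv)\circ d)-C-\beta I\succ 0$. The real work is on the primal side, and I would reduce it to an inertia statement about the matrix $M:=aa^\top-\sigma_v\tau\cdot\dd(a\circ d)\in\S^n$ appearing in the equality constraint of (\ref{escSDP}): a strictly feasible point of (\ref{escSDP}) exists as soon as $M$ is \emph{indefinite} (has both a positive and a negative eigenvalue), since then one can produce a positive definite matrix in the hyperplane $\{X:\<M,X\>=0\}$ and rescale it to unit trace.

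The first substantive step is therefore to prove that $M$ is indefinite, splitting into the two families of non-regular points from Proposition~\ref{regular}. If $v\neq 0$, then $\sigma_v=1$ and $a^\top v=\tau$, hence also $v\neq e$ because $e^\top a>\tau$; inspecting the diagonal of $M$ with $d=2v-e$ gives $M_{ii}=a_i(a_i-\tau)<0$ when $v_i=1$ and $M_{ii}=a_i(a_i+\tau)>0$ when $v_i=0$, using $0<a_i<\tau$ from Assumption~\ref{ass1}, and since $v$ has entries equal to both $1$ and $0$, $M$ has diagonal entries of both signs, hence is indefinite. If $v=0$, then $\sigma_v=-1$ and $d=-e$, and a short computation gives $M=aa^\top-\tau\dd(a)=\dd(\sqrt{a})\big(\sqrt{a}\sqrt{a}^{\top}-\tau I\big)\dd(\sqrt{a})$ with $\sqrt{a}$ the componentwise square root (well defined since $a>0$); so $M$ is congruent to $\sqrt{a}\sqrt{a}^{\top}-\tau I$, whose eigenvalues are $e^\top a-\tau>0$ on $\mathrm{span}(\sqrt{a})$ and $-\tau<0$ of multiplicity $n-1\ge 1$, and Sylvester's law of inertia then shows $M$ is indefinite.

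For the final step I would make the rescaling explicit. Writing a spectral decomposition $M=\sum_{i=1}^{n}\lambda_i p_ip_i^\top$ with $\{p_i\}$ orthonormal, set $S_+:=\sum_{\lambda_i>0}\lambda_i>0$ and $S_-:=\sum_{\lambda_i\le 0}\lambda_i<0$ (strict, since $M$ has a negative eigenvalue), take weights $c_i:=1$ for $\lambda_i\le 0$ and $c_i:=-S_-/S_+>0$ for $\lambda_i>0$, and put $X_0:=\sum_{i=1}^{n}c_i p_ip_i^\top$. Then $X_0\succ 0$ and $\<M,X_0\>=\sum_{i=1}^{n}\lambda_i c_i=0$, so $X:=X_0/\Tr(X_0)$ is positive definite with $\Tr(X)=1$ and $\<aa^\top-\sigma_v\tau\cdot\dd(a\circ d),X\>=0$, i.e. strictly feasible for (\ref{escSDP}). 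The only place needing a genuine idea is the indefiniteness of $M$, and in particular its $v=0$ case (the congruence trick above); once strict feasibility of both problems is in hand, the zero duality gap between (\ref{escSDP}) and (\ref{escSDPd}) and the attainment of both optimal values follow from standard conic duality.
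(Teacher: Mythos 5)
Your proposal is correct, and it reaches the Slater points by a route whose details differ from the paper's. The paper likewise dismisses (\ref{escSDPd}) by taking $\beta$ sufficiently negative; for (\ref{escSDP}) it builds the strictly feasible point directly, exhibiting $X_1=ee^\top/n$ with $\inprod{M}{X_1}>0$ (from $e^\top a>\tau$, where $M:=aa^\top-\sigma_v\tau\cdot\dd(a\circ d)$) and, for $v\neq 0$, a smoothed indicator $Y_t=(1-t)\dd(e_i)+\tfrac{t}{n}I\succ 0$ with $\inprod{M}{Y_t}<0$ for an index $i$ with $v_i=1$ (from $a_i<\tau$), and then taking the convex combination $\beta X_1+(1-\beta)Y_t$, which is positive definite, has unit trace and lies on the hyperplane; the case $v=0$ is treated the same way. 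You instead isolate the key fact as the indefiniteness of $M$, proved via the sign pattern of its diagonal when $v\neq 0$ (where you correctly observe $v\neq e$, so both signs occur) and via the congruence $M=\dd(\sqrt a)\left(\sqrt a\,\sqrt a^\top-\tau I\right)\dd(\sqrt a)$ together with Sylvester's law of inertia when $v=0$, and then apply a general spectral rebalancing that produces a positive definite, unit-trace matrix orthogonal to any indefinite matrix. Both arguments rest on the same underlying observation, that $X\mapsto\inprod{M}{X}$ takes both signs on $\S^n_+$; yours is more modular (indefiniteness of $M$ plus a reusable construction) and handles $v=0$ by a genuinely different and arguably cleaner device, while the paper's interpolation-plus-continuity argument avoids any spectral decomposition. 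Your closing remark that strict feasibility on both sides gives zero duality gap and attainment is exactly how the paper uses the lemma.
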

 \begin{proof}
 \pf
 It is easy to see that the feasibility of (\ref{escSDPd}) is strictly satisfied by choosing $\beta$ to be sufficiently negative. We only have to find a strictly feasible solution for (\ref{escSDP}). \\
 
 \noindent{\bf Case 1.} $v\neq 0.$\\
 In this case, $\sigma_v=1.$ Let $X_1:=\frac{ee^\top}{n}.$ We have that $X_1\succeq 0,$ $\Tr(X_1)=1.$ Also,
 \begin{equation}\label{May_24_4}
 \<aa^\top-\sigma_v\tau\cdot\dd\(a\circ d\),X_1\>=\frac{\(e^\top a\)^2-\tau d^\top a}{n}>\frac{\(e^\top a\)^2-\tau e^\top a}{n}>0,
 \end{equation}
 where the last inequality comes from the fact that $e^\top a>\tau.$ Because $v\neq 0,$ there exists $i\in [n]$ such that $d_i=1.$ For $t\in [0,1],$ define $Y_t=(1-t)\dd\(e_i\)+\frac{t}{n}I.$ We have that $Y_t\succeq 0$ and $\Tr(Y_t)=1.$ Also, 
 \begin{equation}\label{May_24_5}
 \<aa^\top-\sigma_v\tau\cdot\dd\(a\circ d\),Y_0\>=a_i^2-\tau a_i<0,
 \end{equation}
 where the last inequality comes from Assumption~\ref{ass1}. From continuity, there exists $t\in (0,1)$ such that $ \<aa^\top-\sigma_v\tau\cdot\dd\(a\circ d\),Y_t\><0$ and $Y_t\succ 0.$ From (\ref{May_24_4}), there exists $\beta\in \(0,1\)$ such that $\beta X_1+(1-\beta)Y_t\succ 0,$ $\<I,\beta X_1+(1-\beta)Y_t\>=1$  and 
   \begin{equation}\label{May_24_6}
 \<aa^\top-\sigma_v\tau\cdot\dd\(a\circ d\),\beta X_1+(1-\beta)Y_t\>=0.
 \end{equation}
 Therefore, $\beta X_1+(1-\beta)Y_t$ is a strict feasible solution of (\ref{escSDP}).\\
 
 \noindent{\bf Case 2.} $v=0.$\\
 In this case, $\sigma_v=-1$ and $aa^\top-\sigma_v\tau\cdot\dd\(a\circ d\)=aa^\top-\tau\dd\(a\).$ {We} can construct a strictly feasible solution in the same way as in Case 1.
 \end{proof}

\end{document}